
\documentclass[10pt]{article}
\usepackage{amssymb,amsmath,amsthm}


\newcommand{\R}{\mathbb{R}}
\newcommand{\N}{\mathbb{N}}
\newcommand{\F}{\mathbb{F}}
\newcommand{\C}{\mathbb{C}}
\DeclareMathOperator{\re}{Re}
\newcommand{\de}{\partial}

\def\half{{1\over 2}}
\def\abs#1{|#1|}

\def\norm#1{\|#1\|}
\def\cat{\mathop{\rm cat}\nolimits}
\def\dist{\mathop{\rm dist}\,}

\def\wlimit{\rightharpoonup}

\def\supp{\mathop{\rm supp}}

\def\be{\begin{equation}}
\def\ee{\end{equation}}


\def\epsilon{\varepsilon}
\def\intRN{\int_{\R^N}}

\def\Ie{I_\epsilon}
\def\Je{J_\epsilon}
\def\Qe{Q_\epsilon}

\def\calS{{\cal S}}

\def\calV{{\cal V}}
\def\calK{{\cal K}}

\def\whS{\widehat S}

\def\wrho{\widehat\rho}

\def\ue{u_\epsilon}

\def\cuplength{{\rm cupl}}

\def\hdelta{\hat\delta}

\def\calXed{{\cal N}_{\epsilon,\delta}}
\def\calXp{\calXed^{E(m_0)+\hdelta}}
\def\calXm{\calXed^{E(m_0)-\hdelta}}

\newcommand{\e}{\varepsilon}


\newtheorem{theorem}{Theorem}[section]
\newtheorem{proposition}[theorem]{Proposition}
\newtheorem{corollary}[theorem]{Corollary}
\newtheorem{lemma}[theorem]{Lemma}
\newtheorem{remark}[theorem]{Remark}
\newtheorem{definition}[theorem]{Definition}


\begin{document}

\title
{Multiple complex-valued solutions for nonlinear magnetic Schr\"odinger equations \\ \vskip 0.5 true cm
{\large Dedicated to Paul Rabinowitz with great admiration and esteem}\vskip 0.5 true cm}

\author{Silvia Cingolani \thanks{The first author is partially supported by GNAMPA-INDAM Project 2015
	 	\emph{Analisi variazionale di modelli fisici non lineari}.}  
	 \\
Dipartimento di Meccanica, Matematica e Management\\
Politecnico di Bari\\
Via E. Orabona 4, 70125 Bari, Italy\\
\\
Louis Jeanjean \thanks{This work has been carried out in the framework of the project NONLOCAL (ANR-14-CE25-0013), funded by the French National Research Agency (ANR). }
\\
Laboratoire de Math\'ematiques (UMR CNRS 6623)\\
Universit\'e de Franche-Comt\'e\\
16, Route de Gray, 25030 Besan\c{c}on Cedex, France\\
\\
Kazunaga Tanaka \thanks{The third author is partially supported by JSPS Grants-in-Aid for Scientific Research (B) (25287025)
and Waseda University Grant for Special Research Projects 2015B-189.}
\\
Department of Mathematics\\
School of Science and Engineering\\
Waseda University\\
3-4-1 Ohkubo, Shijuku-ku, Tokyo 169-8555, Japan}

\date{}

\maketitle


\begin{abstract}
We study, in the semiclassical limit, the singularly perturbed nonlinear Schr\"odinger equations
\be \label{eq:0.1}
L^{\hbar}_{A,V} u = f(|u|^2)u \quad\hbox{in $\R^N$}
\end{equation}
where $N \geq 3$, $L^{\hbar}_{A,V}$ is the Schr\"odinger operator  with a magnetic field having 
source in a $C^1$ vector potential $A$ and a scalar continuous (electric) potential $V$ defined by
\begin{equation}
L^{\hbar}_{A,V}=
-\hbar^2 \Delta-\frac{2\hbar}{i} A \cdot
\nabla + |A|^2- \frac{\hbar}{i}\operatorname{div}A + V(x).
\end{equation}
Here $f$ is a nonlinear term which satisfies the so-called Berestycki-Lions conditions.
We assume that there exists a bounded domain $\Omega \subset \R^N$ such that
\[ m_0 \equiv \inf_{x \in \Omega} V(x) <    \inf_{x \in \partial \Omega} V(x) \]
and we set $K = \{ x \in \Omega \ | \ V(x) = m_0\}$. For $\hbar >0$ small we prove the existence of 
at least ${\cuplength}(K) + 1$
geometrically distinct, complex-valued solutions to (\ref{eq:0.1}) whose modula concentrate 
around $K$ as $\hbar \to 0$.
\end{abstract}


\setcounter{equation}{0}
\section{\label{section:1} Introduction}

In the present work we study, in a semiclassical regime, a nonlinear
magnetic Schr\"o\-dinger equation, which arises
in many fields of physics, in particular condensed matter physics and
nonlinear optics.  More precisely, we are looking
for stationary states to the evolution equation
\begin{equation}\label{eq:evol}
i\hbar \frac{\de \psi}{\de t} = L^{\hbar}_{A,W} \psi - f(|\psi|^2) \psi
\quad\hbox{in $\R^{+}\times\R^N$},
\end{equation}
where $i^2=-1$ and
$L^{\hbar}_{A,W}$ is the Schr\"odinger operator  with a magnetic field $B$ having source in 
a $C^1$ vector potential $A$ and a continuous scalar (electric) potential $W$ defined by
\begin{equation}
L^{\hbar}_{A,W}= \left( \frac{\hbar}{i} \nabla -A\right)^2 + W(x) =
-\hbar^2 \Delta-\frac{2\hbar}{i} A \cdot
\nabla + |A|^2- \frac{\hbar}{i}\operatorname{div}A + W(x).
\end{equation}
Mathematically the transition from Quantum mechanics to Classical mechanics is described letting 
to zero the Planck constant ($\hbar \to 0$) and the solutions, which exist for small value of $\hbar$, 
are  usually referred  {\it
semiclassical bound states}.
For the physical background, we refer to \cite{sulem}.

The {\sl ansatz} that the solution $\psi(x,t)$
to $(\ref{eq:evol})$ is a standing wave of the form
\[
\psi(t,x)=e^{-i E\hbar^{-1}t} u(x),
\]
with $E \in \R$ and $u \colon \R^N \to \C$, leads us to solve
the complex-valued semilinear elliptic equation
\begin{equation}\label{eq:SS}
L^{\hbar}_{A,W} u = Eu + f(|u|^2) u \quad\hbox{in $\R^N$}.
\end{equation}
In the work we consider an electric potential $W(x)$ which is
bounded from below on $\R^N$, and we choose $E$ such that $V(x)=
W(x) -E$ is strictly positive. Hence equation $(\ref{eq:SS})$
becomes
\begin{equation}\label{eq:S}
L^{\hbar}_{A,V} u = f(|u|^2)u \quad\hbox{in $\R^N$}
\end{equation}
where $V$ is a strictly positive potential.

Concerning the zero external magnetic case, there is an extensive literature starting from the paper 
by Floer and Weinstein \cite{FlWe} (see for instance  \cite{ABC,ams,Cila,CL,DF,DFbis,JT2,Li,O,R,BJ,BJT,BT1,CJT}).

To our knowledge, the first paper, in which 
equation $(\ref{eq:S})$ is considered, is
due to Esteban and Lions \cite{EL}. The authors proved the existence of
standing wave solutions to $(\ref{eq:S})$ for $\hbar>0$ fixed and $N=3$, by a constrained
minimization for a constant electric potential and a cubic nonlinearity.
Concentration and compactness arguments are applied to solve such
minimization problems for special classes of magnetic fields.

Successively, in \cite{Ku} Kurata proved the existence of least
energy solutions to $(\ref{eq:S})$ for any fixed $\hbar >0$, under
some assumptions linking the magnetic field $B$ and the electric
potential $V$.
He also studied the concentration phenomena of a family of least solutions of $(\ref{eq:S})$ in 
the semiclassical limit, showing that concentration of the modula of such solutions occurs at
global minima of the electric potential $V$. For periodic scalar and vector potentials,
we refer to the paper by  Arioli and Szulkin \cite{ASz}.

A first multiplicity result of semiclassical solutions to
$(\ref{eq:S})$, when $f(t) = |t|^{(p-1)/2}$, $1 < p < (N+2)/(N-2)$ for $N>2$ and $ 1 < p <  + \infty$ if $N=2$,
as $\hbar \to 0$, has been proved by the first author in
\cite{C}.
Since the nonlinearity satisfies the Nehari monotonicity condition, the problem can be reduced to the search 
of critical points of a functional constrained to a Nehari manifold and the number of complex-valued solutions 
to $(\ref{eq:S})$ can be related to the topology of (global) sublevel of the functional  
by standard deformation theorems for Hilbert manifold without boundary. Finally by means of an entrance map 
and a barycenter map, this number is estimated by the topological richness of the set of global minima of 
the electric potential $V$. 

\vspace{2mm}
The existence of complex-valued solutions of the magnetic Schr\"odinger equations in $\R^3$,
whose modula concentrate at local minima of $V$, has been derived in \cite{CS1} using a penalization argument 
(see also \cite{CS}) for a cubic nonlinearity. 
See also \cite{AlFiFu} for an extention of the results of \cite{CS,CS1}.
Successively, in \cite{CJS} the existence of multi-peaks solutions for nonlinear Schr\"odinger equations  
with an external magnetic field is proved for a more general nonlinearity which satisfies the Berestycki-Lions 
conditions, using a variational approach developed in \cite{BJ} for the zero external magnetic case. Recently,  
the existence of semiclassical cylindrically symmetric solutions, whose moduli concentrate around circles driven by the magnetic and electric fields, has been established in  \cite{BCN}, assuming that $A$ and $V$ are cylindrically symmetric potentials.

In this paper we are interested to find a multiplicity result to $(\ref{eq:S})$ for a large class of 
magnetic potentials $A$ and under nearly optimal assumptions on the nonlinearity.

\vspace{2mm}
Throughout the paper, we assume that $N \geq 3$ and
\begin{itemize}
\item[(A1)] $A\colon \R^N\to\R^N$ is a $C^1$ function such that, for some positive constants $C, \gamma$ 
$$|J_A(x)| \leq C e^{\gamma |x|}, \quad  \forall x \in \R^N$$ 
where $J_A$ denotes the Jacobian matrix of $A$ at $x$;
\item[(V1)] $ V \in C(\R^N, \R)$ and $\inf_{x \in \R^N} V(x) = \underline{V} >0$;
\item[(V2)] There is a bounded domain $\Omega \subset \R^N$ such that
\[ m_0 = \inf_{x \in \Omega} V(x) <    \inf_{x \in \partial \Omega} V(x).\]
\end{itemize}

On the nonlinearity we require that
\begin{itemize}
\item[(f1)] $f \colon [0, + \infty) \to \R$ is continuous;
\item[(f2)] $f(0)=\lim_{\xi\to 0^+}{f(\xi)}=0$;
\item[(f3)]
there  exists some $1 < p < \frac{N+2}{N-2},$ 
                 such that  $\lim_{\xi \to \infty} f(\xi^2)/\xi^{p-1}=0$;
\item[(f4)] There exists $\xi_0>0$ such that
    $$ 
    F(|\xi_0|^2) > m_0 \xi_0^2 \quad \mbox{where} \quad F(\xi)=\int_0^\xi f(\tau)\, d \tau.
    $$
    \end{itemize}

We remark that, under our assumptions of $f$, the search of solutions to $(\ref{eq:S})$ cannot be reduced 
to the study of the critical points of a functional restricted to a Nehari manifold.
We obtain multiplicity results of complex-valued solutions to $(\ref{eq:S})$ by means of a new variational 
approach developed in the recent paper \cite{CJT} for the scalar nonlinear Schr\"odinger equation corresponding 
to $A=0$. Precisely, we seek for critical points of the indefinite Euler functional associated to
problem $(\ref{eq:S})$ in a suitable neighborhood of expected solutions, 
suggested
by a complex-valued limiting problem.
In order to apply critical point theory, one of the main problem in this approach is the presence of the boundary.
However
we  recognize that this neighborhood is positively invariant for a pseudo gradient flow and we  derive a deformation 
theorem. In this way we relate the number of complex-valued solutions to the relative category of two sublevels 
of the functional in the neighborhood of expected solutions.  Finally this relative category will be estimated 
by means of the cuplength of the local minima set $K$ of $V$.  To this aim, we need to construct  two maps between 
topological pairs, which involve a barycenter map and a Pohozaev type function.
We remark that the presence of the magnetic field produces several additional difficulties, due to the fact 
that  the space $H_{\e, V, A}(\R^N, \C)$, where the Euler functional associated to  $(\ref{eq:S})$ is naturally 
defined,  can not be embedded into $H^1(\R^N, \C)$ where the limiting problem (as $\hbar \to 0$) is set up 
(see \cite{EL}).  In order to perform our variational arguments, we crucially use of the diamagnetic inequality 
which, in particular allows to prove that 
the least energy level of the complex-valued limiting problem  coincides with the least energy level of a 
real-valued problem (see Section 2). This fact enables to construct a barycenter map and a Pohozaev type map, 
which act only on the modula of the complex-valued functions.

Our main result is the following:

\vspace{1mm}

\begin{theorem}\label{claim:1.1}
Suppose $N\geq 3$ and that (A1), (V1)--(V2) and (f1)--(f4) hold. Assume in addition that 
$\sup_{x\in\R^N}V(x)<\infty$. Then letting
$K=\{ x\in\Omega ; \, V(x)=m_0\}$, for sufficiently small $\epsilon >0$, $(\ref{eq:S})$ has at least
$\cuplength(K)+1$ geometrically distinct, complex-valued solutions $v_{\e}^i$, $i=1, \dots, \cuplength(K)+1$, 
whose modula concentrate as $\e \to 0 $ in $K$,
where $\cuplength(K)$ denotes the cup-length defined with Alexander-Spanier cohomology with coefficients
in the field $\F$.
\end{theorem}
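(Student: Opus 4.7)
The plan is to adapt the variational framework of \cite{CJT} from the scalar to the magnetic setting, overcoming the additional difficulties caused by the complex-valued unknown and the fact that the natural magnetic Sobolev space $H_{\e,V,A}(\R^N,\C)$ does not embed into $H^1(\R^N,\C)$. After the usual rescaling $v(x)=u(\e x)$ with $\e=\hbar$, the problem $(\ref{eq:S})$ becomes an equation with slowly varying coefficients, whose associated functional $\Ie$ admits as natural limit (around any point $x_0\in K$) the translation-invariant functional attached to $-\Delta w+m_0w=f(|w|^2)w$, with least energy $E(m_0)$ and ground state set $\whS$. A key preliminary, obtained via the diamagnetic inequality as announced by the authors, is that $E(m_0)$ also equals the least energy of the complex-valued magnetic limit problem, so that all barycenter-type constructions below may be expressed in terms of $|u|$ alone.

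Following the strategy of \cite{CJT}, I would next introduce a neighborhood $\calXed\subset H_{\e,V,A}$ of the expected set of concentrating profiles: functions that, up to an $A$-dependent phase factor and a translation by some $y$ close to $K$, lie within $\delta$ of $\whS$ in the natural scaled norm and whose modulus is suitably concentrated. The central technical step is a quantitative gradient estimate
\[
  \|\nabla \Ie(u)\|\;\ge\;c(\delta)\;>\;0
  \qquad\text{on}\qquad \de\calXed\cap \Ie^{-1}\bigl([E(m_0)-\hdelta,\,E(m_0)+\hdelta]\bigr),
\]
valid for all small $\e$. From this one constructs a pseudo-gradient vector field for $\Ie$ tangent to $\calXed$, obtaining both the positive invariance of $\calXed$ and a deformation lemma for the topological pair $(\calXp,\calXm)$.

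With the deformation lemma at hand, I would bound the relative category $\cat_{\calXp}(\calXp,\calXm)$ from below by $\cuplength(K)+1$ by exhibiting two continuous maps: an \emph{entrance map} $\Phi_\e:K\to \calXp$ assigning to each $x_0\in K$ a properly phase-twisted and translated copy of a ground state, landing at energy close to $E(m_0)$; and an \emph{exit map} $\Psi_\e:\calXp\to \mathcal{U}$, where $\mathcal{U}$ is a small tubular neighborhood of $K$, built from a barycenter map applied to $|u|^2$ together with a Pohozaev-type functional. Thanks to the diamagnetic inequality these maps only see $|u|$, so the standard scalar constructions carry over, and one checks that $\Psi_\e\circ\Phi_\e$ is homotopic to the inclusion $K\hookrightarrow \mathcal{U}$. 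This induces an injection in Alexander--Spanier cohomology with coefficients in $\F$, and a standard cup-length estimate gives $\cat_{\calXp}(\calXp,\calXm)\ge \cuplength(K)+1$, producing that many critical points of $\Ie$ inside $\calXp\setminus\calXm$. Rescaling back yields $\cuplength(K)+1$ geometrically distinct complex-valued solutions of $(\ref{eq:S})$ whose modula concentrate in $K$ as $\e\to 0$.

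The step I expect to be the main obstacle is proving the positive invariance of $\calXed$. Already in the scalar case of \cite{CJT} this is the delicate heart of the argument, and here the magnetic kinetic term $(\hbar/i)\nabla-A$ couples the modulus and phase of $u$, so the decomposition of a generic $u\in\calXed$ into a ground state component and a small remainder is not orthogonal in any natural sense. The pseudo-gradient field must therefore be engineered to point inward on $\de\calXed$ while being compatible with the $U(1)$ gauge ambiguity; getting a uniform lower bound on $\|\nabla \Ie\|$ near the boundary, without the Nehari constraint that was available in \cite{C}, is what makes the problem genuinely new.
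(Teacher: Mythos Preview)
Your outline follows the paper's strategy, but two ingredients that the authors use in an essential way are missing. First, the paper does not work with $\Ie$ directly but with the penalized functional $\Je(u)=\Ie(u)+\Qe(u)$, where $\Qe(u)=\bigl(\e^{-2}\norm{u}_{L^2(\R^N\setminus(\Omega_{2h_0}/\e))}^2-1\bigr)_+^{(p+1)/2}$. The penalization is what forces concentration into $\Omega$ (nothing in (V1)--(V2) prevents $V$ from being $\leq m_0$ far away), it provides the Palais--Smale condition inside the working neighborhood (Proposition~\ref{claim:3.3}), and, crucially, it gives the uniform lower bound $\Je(u)\geq L_{m_0}(|u|)-C_0\e^2$ of Lemma~\ref{claim:3.4}. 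This last estimate is precisely what guarantees that $u\in\calXm$ implies $L_{m_0}(|u|)<E(m_0)$ and hence $P_0(u)\neq 1$, so that the exit map lands in the correct piece of the target pair; for $\Ie$ alone no such inequality is available. One then checks a posteriori (Proposition~\ref{claim:3.2}) that critical points of $\Je$ in $\calXed$ satisfy $\Qe=0$ and therefore solve the original equation.

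Second, your entrance map is written as $\Phi_\e:K\to\calXp$, but in the Berestycki--Lions setting, with no Nehari constraint to fix the scale, the correct object is a map of \emph{pairs}
\[
\tilde\Phi_\e:\,\bigl([1-s_0,1+s_0]\times K,\ \{1\pm s_0\}\times K\bigr)\to(\calXp,\calXm),\qquad
(s,p)\mapsto e^{\,iA(p)\frac{x-p/\e}{s}}\,U_0\Bigl(\frac{x-p/\e}{s}\Bigr),
\]
with the extra dilation parameter $s$ dual to the Pohozaev functional $P_0$ on the exit side; it is this interval factor, together with the cup-length estimate for the inclusion $(I\times K,\partial I\times K)\hookrightarrow(I\times K_d,\partial I\times K_d)$ (Lemma~\ref{claim:5.4}), that yields the bound on the relative category. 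Finally, to obtain \emph{geometrically distinct} solutions one must make the whole construction $\mathbb{S}^1$-equivariant, pass to the quotient, and use $\mathbb{S}^1\text{-}\cat(\calXp,\calXm)=\cat(\calXp/\mathbb{S}^1,\calXm/\mathbb{S}^1)$ via the free action; your sketch alludes to the $U(1)$ ambiguity but does not build it into the deformation and the category count.
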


We say that two complex-valued solutions to $(\ref{eq:S})$ are geometrically distinct if their
$\mathbb{S}^{1}$-orbits are different.
Our theorem covers the relevant physical case of constant magnetic fields $B$ which leads to vector
potentials $A$, having a polynomial growth on $\R^N$.  For
instance, if $B$ is the constant magnetic field $(0,0,b)$, then a
suitable vector field is given by $A(x)= \frac{b}{2}(-x_2,x_1,0)$.
In physical literature the potential $A$ corresponds to the
so-called {\it Lorentz gauge} (see \cite{EL}).

\begin{remark}\label{examples}
If $K=S^{N-1}$, the $N-1$ dimensional sphere in $\R^N$, then $\cuplength(K) + 1=  \cat(K) =2$.  If $K=T^N$ is 
the $N$-dimensional torus, then $\cuplength(K) + 1 = \cat(K)= N + 1$.
However in general $\cuplength(K) +1 \leq \cat(K)$.
\end{remark}

\begin{remark}\label{mean-concen}
When we say that the solutions  $v_{\e}^i$, $i=1, \dots, \cuplength(K)+1$ of Theorem \ref{claim:1.1} concentrate 
when $\e \to 0$ in K, we mean that there exists a maximum point $x_{\e}^i$ of $|v_{\e}^i|$ such that 
$\lim_{\e \to 0}dist(x_{\e}^i, K) =0$ and that, for any such $x_{\e}^i$, $|w_{\e}^i| = |v_{\e}^i(\e(\cdot + x_{\e}^i))|$ converges, up to a subsequence, uniformly to a least energy solution of
    $$- \Delta U + m_0 U = f(U), \quad U >0, \quad U \in H^{1}(\R^N, \R).
    $$
We also have
    $$  |v_{\e}^i(x)| \leq C exp(- \frac{c}{\e}|x- x_{\e}^i|) \quad \mbox{ for some } c,\, C >0.
    $$
\end{remark}

\begin{remark} In addition to condition $(V1)$ the boundedness of $V$ from above is assumed in 
Theorem \ref{claim:1.1}. Arguing as in \cite{BJ,BT1,CJS}  we could prove Theorem \ref{claim:1.1} 
without this additional assumption. However, for the sake of simplicity, we assume here the boundedness of $V$.
\end{remark}

\setcounter{equation}{0}
\section{\label{section:2} The variational framework $H_{\e, V, A}$}

Let $\hbar =
\varepsilon$,   $v(x) = u(\e x)$, $A_\e
(x) = A(\e x)$ and $V_\e(x) = V(\e x).$ Then equation $(\ref{eq:S})$ is
equivalent to
\begin{equation}\label{eq:2.1}
\left( \frac{1}{i}\nabla -A_\e(x)\right)^2 v + V_\e(x)v - f(|v|^2)v
= 0, \quad x \in \R^N.
\end{equation}

Let $H_{\e,V,A} (\R^N, \C)$ be the Hilbert space defined by the completion of
$C_0^\infty(\R^N, \C)$ under the scalar product
\begin{equation} \label{eq:2.2}
\langle u,v \rangle_{\e,V,A} = \re \int_{\R^N} \left(\frac{1}{i}\nabla u
- A_\e (x)u \right) \left(\overline{\frac{1}{i}\nabla v-
A_\e (x)v}\right)  + V_\e(x) u \overline{v} \ dx .
\end{equation}
We denote by $\| \cdot  \|_{\e, V, A}$ the associated norm and in the special case $V=1$, we set 
$\| \cdot  \|_{\e} := \| \cdot  \|_{\e, 1, A}$. Also let $H_\e$ denote the space $H_{\e,1,A} (\R^N, \C)$.

In what follows we use the notations:
    \begin{eqnarray*}
        \norm u_{H^1} &=& \left(\intRN \abs{\nabla u}^2+u^2\, dx\right)^{1/2},\\
        \norm u_r &=& \left(\intRN \abs{u}^r\, dx\right)^{1/r} \quad
                        \hbox{for} \ r\in [1,\infty).
    \end{eqnarray*}

It is well known that, in general, there is
no relationship between the spaces $H_\e$ and $H^1(\R^N, \C)$,
namely $H_\e \not \subset H^1(\R^N, \C)$ nor $H^1(\R^N, \C)\not \subset H_\e$ (see \cite{EL}).

We now recall 
the following \textit{diamagnetic inequality}: for every $u \in H_\e$,
\begin{equation}\label{eq:2.3}
\left| \left(\frac{\nabla}{i} - A_\e \right) u\right| \geq |\nabla |u\|,
\quad \hbox{a.e. in $\R^N$. }
\end{equation}
See \cite{EL} for a proof. 
As a consequence of \eqref{eq:2.3},
$|u| \in H^1(\R^N, \R)$ for any $u \in H_{\e}$.
We also have for any $r\in [2,\frac{2N}{N-2}]$ there exists $C_r>0$ independent of $\e$ such that
    \begin{equation}\label{eq:2.4}  
    \| u\|_r \leq C_r\| u\|_\e \quad \text{for all}\ u\in H_\e.
    \end{equation}
We also note that for any compact set $K\subset\R^N$ there exists a $C_K>0$ independent of $\e\in (0,1]$
such that
    \begin{equation}\label{eq:2.5}
    \| u\|_{H^1(K)} \leq C_K\| u\|_\e \quad \text{for all}\ u\in H_\e.
    \end{equation}
See \cite[Corollary 2.2]{CJS}.
Thus convergence in $H_\e$ implies convergence in $H^1_{loc}(\R^N,\C)$.
We can also derive the following lemma.
\medskip

\begin{lemma}\label{continuity}
For each $\e>0$, 
	$$	H_\e\to H^1(\R^N,\R);\, u\mapsto |u|
	$$
is a continuous map.
\end{lemma}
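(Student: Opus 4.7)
The plan: let $u_n\to u$ in $H_\e$, so $u_n\to u$ in $L^2(\R^N,\C)$ and $Du_n\to Du$ in $L^2(\R^N,\C^N)$, where I write $Du:=(\tfrac{\nabla}{i}-A_\e)u$ for brevity. The pointwise bound $\bigl||u_n|-|u|\bigr|\le|u_n-u|$ yields $\||u_n|-|u|\|_2\to 0$ immediately, so the substantive task is to prove $\nabla|u_n|\to\nabla|u|$ in $L^2(\R^N,\R^N)$. By the subsequence principle it suffices to pass to a subsequence along which both $u_n\to u$ and $Du_n\to Du$ a.e.

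The central algebraic fact is the pointwise identity
\[|u|\,\nabla|u|=\re(\bar u\,\nabla u)=\re(i\bar u\,Du)\quad\text{a.e.,}\]
obtained from $|u|^2=u\bar u$, $\nabla u=i(Du+A_\e u)$, and the reality of $A_\e|u|^2$. Since \eqref{eq:2.5} places $u$ in $H^1_{\text{loc}}(\R^N,\C)$, the classical Stampacchia vanishing result (applied separately to $\re u$ and $\im u$) forces $\nabla u=0$, hence $Du=0$ and $\nabla|u|=0$, a.e.\ on $\{u=0\}$. I would then split $\int\bigl|\nabla|u_n|-\nabla|u|\bigr|^2\,dx$ over $\{u\neq 0\}$ and $\{u=0\}$. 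On $\{u\neq 0\}$, the identity applied to $u_n$ together with a.e.\ convergence gives $\nabla|u_n|\to\nabla|u|$ pointwise; the diamagnetic inequality \eqref{eq:2.3} supplies the domination $\bigl|\nabla|u_n|-\nabla|u|\bigr|^2\le 2(|Du_n|^2+|Du|^2)$; and the $L^2$-convergence $Du_n\to Du$ upgrades to $L^1$-convergence $|Du_n|^2\to|Du|^2$, so the generalized dominated convergence theorem (with variable, $L^1$-convergent dominants) drives the integral on $\{u\neq 0\}$ to zero. On $\{u=0\}$ the target is zero, so it suffices to note $\int_{\{u=0\}}\bigl|\nabla|u_n|\bigr|^2\,dx\le\int_{\{u=0\}}|Du_n|^2\,dx\to\int_{\{u=0\}}|Du|^2\,dx=0$, using \eqref{eq:2.3} and the same $L^1$-convergence.

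The main obstacle is the null-set $\{u=0\}$: the chain-rule formula $\nabla|u|=\re(i\bar u\,Du)/|u|$ is meaningless there, and $\nabla|u_n|$ need not be small where $u_n\neq 0$ but $u$ vanishes. The two ingredients that rescue the argument are the local $H^1$-regularity guaranteed by \eqref{eq:2.5} (pinning $\nabla u$, and hence $Du$, to zero a.e.\ on the null set) and the \emph{strong}, not merely weak, $L^2$-convergence of $Du_n$ (which upgrades to $L^1$-convergence of the squares, essential both for the generalized DCT on $\{u\neq 0\}$ and for forcing the $\{u=0\}$-integral to the limit). A routine subsequence argument then promotes the conclusion from the extracted subsequence to the full sequence.
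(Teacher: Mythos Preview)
Your argument is correct. The paper's proof uses the same ingredients---the diamagnetic inequality \eqref{eq:2.3}, the local $H^1$-embedding \eqref{eq:2.5}, a.e.\ convergence after a subsequence, and a (generalized) Lebesgue theorem with the $L^1$-convergent majorants $|Du_n|^2$---but packages the conclusion differently: instead of directly estimating $\|\nabla|u_n|-\nabla|u|\|_2^2$ by splitting over $\{u\neq 0\}$ and $\{u=0\}$, it first observes that $(|u_n|)$ is bounded in $H^1$ and hence weakly convergent, then uses the a.e.\ convergence $|\nabla|u_n||\to|\nabla|u||$ together with the dominants $|Du_n|^2$ to obtain $\| |u_n|\|_{H^1}\to\||u|\|_{H^1}$, and concludes by ``weak convergence plus norm convergence implies strong convergence''. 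Your direct approach avoids the detour through weak convergence and is more explicit about the null set $\{u=0\}$ (the paper's sentence ``$|\nabla|u_n||\to|\nabla|u||$ a.e.'' tacitly relies on the same Stampacchia-type fact you spell out); the paper's route is slightly shorter once one is willing to quote the Hilbert-space principle. Either way the substance is the same.
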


\begin{proof}
It suffices to show for any strongly convergent sequence $u_n\to u$ in $H_\e$ that $|u_n|\to |u|$ 
strongly in $H^1(\R^N,\R)$.  
Suppose that $(u_n) \subset H_\e$ converges to $u \in H_\e$ strongly in $H_\e$.  We infer, by \eqref{eq:2.4}
that $(u_n)$ strongly converges to $u$ in $L^2 (\R^N, \C)$ and in particular $|u_n| \to |u|$ in  $L^2 (\R^N, \R)$. 
Now from  $(\ref{eq:2.3})$ we deduce that  $(|u_n|)$ is bounded in $H^1(\R^N,\R)$ and weakly converges
in $H^1(\R^N,\R)$, up to subsequences.  Moreover 
since strong convergence in $H_\e$ implies strong convergence in $H^1_{loc}(\R^N)$,
$(|\nabla |u_n|(x)|)$ converges to $|\nabla |u|(x)|$, a.e in $\R^N$. Using again $(\ref{eq:2.3})$, we derive, 
by Lebesgue's Theorem, 
that $\| |u_n| \|_{H^1(\R^N, \R)}$ converges to $ \| |u| \|_{H^1(\R^N, \R)}$ and the conclusion of lemma holds. 
\end{proof}

Another continuity property of $u\mapsto |u|;\, H_\e\to H^1(\R^N)$ will be given in Lemma \ref{claim:2.4} below.

\bigskip
Finally we consider the functional $\Ie$ defined on $H_\e$ by 
    $$
    \Ie(u)=
    \half\int_{\R^N} \left|\left(\frac{1}{i}\nabla  -  A_\e(x) \right)\, u \right|^2 \, dx  
    +\half\int_{\R^N} V_\e(x) |u|^2\, dx
        - \half\intRN F(|u|^2)\, dx,
    $$
where $F(s)= \int_0^s f(t) \, dt$. Without loss of generality, we may  assume that $f(\xi)=0$ for all 
$\xi\leq 0$. It is standard that the functional is $C^1$ and its critical points are solutions of  \eqref{eq:2.1}.

\smallskip
We notice that the group $\mathbb{S}^{1}$ of unit complex numbers acts on $H_{\varepsilon}$ 
by scalar multiplication
$(\gamma,u)\mapsto\gamma u.$ This action is unitary, that is,
\[
\left\langle \gamma u,\gamma v\right\rangle _{\varepsilon,V,A}=\left\langle
u,v\right\rangle _{\varepsilon,V,A}\text{ \ \ \ \ }\forall\gamma\in
\mathbb{S}^{1},\text{ }u,v\in H_{\varepsilon}.
\]
Since the functional $I_{\varepsilon}$ is $\mathbb{S}^{1}$-invariant, that is,%
\begin{align*}
I_{\varepsilon}(\gamma u)  &  =I_{\varepsilon}(u)\text{ \ \ \ \ }%
\forall u\in H_{\varepsilon},\text{
}\gamma\in\mathbb{S}^{1}
\end{align*}
then, if $u \in H_\e$ is a critical point of
$I_{\varepsilon}$, every point $\gamma u$ in the $\mathbb{S}^{1}$-orbit of $u$
is a critical point of $I_{\varepsilon}$.   We say that two critical
points of $I_{\varepsilon}$\ are geometrically distinct if their
$\mathbb{S}^{1}$-orbits are different.
We shall apply
$\mathbb{S}^{1}$-equivariant Lusternik-Schnirelmann theory to obtain a lower bound for the
number of critical $\mathbb{S}^{1}$-orbits of $I_{\varepsilon}$.
\medskip

\subsection{\label{subsection:2.1} Scalar and Complex-valued Limit problems}
Let us consider for $a>0$ the scalar limiting equation of
(\ref{eq:2.1})
\begin{equation}\label{eq:2.6}
-\varDelta u + a  u = f(|u|^2) u, \quad u \in H^1(\R^N, \R).
\end{equation}
(\ref{eq:2.6}) can be obtained as follows:  let $x=y+p/\e$ in (\ref{eq:2.1})
and take a (formal) limit as $\e\to 0$, then we have
    $$  (\frac{1}{i}\nabla -A(p))^2 v+V(p)v= f(|v|^2)v \quad \text{in}\ \R^N.
    $$
Setting $u(x)=e^{-iA(p)x}v(x)$ and considering real-valued solutions, 
we obtain (\ref{eq:2.6}) with $a=V(p)$.
Solutions to $(\ref{eq:2.6})$ correspond to critical points of the 
functional $L_a \colon H^1(\R^N, \R)\to \R$ defined by
\begin{equation}\label{eq:limit}
L_a (u)= \frac12 \int_{\R^N} \big( |\nabla u|^2 + a|u|^2 \big) dy -\half \int_{\R^N}
F(|u|^2) dy.
\end{equation}
We denote by $E(a)$ the least energy level for \eqref{eq:2.6}.
That is,
    $$
    E(a)=\inf\{ L_a(u);\, u\in H^1(\R^N,\R)\setminus \{ 0\},\ L_a'(u)=0\}.
    $$
In \cite{BL} it is proved that there exists a least energy solution of \eqref{eq:2.6}, for any $a >0$, 
if (f1)--(f4) are satisfied (here we consider (f4) with $m_0 = a$). 
Also it is showed that each solution $u$ of \eqref{eq:2.6} satisfies the Pohozaev's identity
\begin{equation}\label{eq:poho}
\frac{N-2}{2}\int_{\R^N} |\nabla u|^2 dx + N \int_{\R^N} a \frac{u^2}{2} -\half F(|u|^2) dx = 0.
\end{equation}
From this we immediately deduce that, for any solution~$\omega$ of
\eqref{eq:2.6},
 \begin{equation} \label{eq:2.9}
\frac{1}{N}\int_{\R^N}|\nabla \omega|^2 dy  = L_a(\omega).
\end{equation}

We also consider the complex-valued equation, for $a>0$,
\begin{equation}\label{eq:2.10}
-\varDelta u + a  u = f(|u|^2) u, \quad u \in H^1(\R^N, \C).
\end{equation}
In turn solutions of $(\ref{eq:2.10})$ correspond to critical points of the
functional $L^{\C}_a :  H^1(\R^N, \C) \to \R$, defined by
\begin{equation}\label{eq:2.11}
L^{\C}_a (v)= \frac12 \int_{\R^N} \left( |\nabla v|^2 + a|v|^2
\right)dy - \half\int_{\R^N} F(|v|^2)dy.
\end{equation}
We denote by $E^{\C}(a)$ the least energy level for \eqref{eq:2.10}, that is
    $$  E^{\C}(a)=\inf\{ L^{\C}_a(u);\, u\not=0,\ {L^{\C}_a}'(u)=0\}.
    $$
In \cite{St} the Pohozaev's identity (\ref{eq:poho}) is established for complex-valued solutions of
(\ref{eq:2.10}) and thus the equivalent of (\ref{eq:2.9}) holds for such solutions.

In \cite[Lemma 2.3]{CJS}, it has been proved that the least energy
levels of (\ref{eq:2.6}) and (\ref{eq:2.10}) coincide and that any least energy solution $U$ of (\ref{eq:2.10}) 
has the form $e^{i \tau}
\omega$ where $\omega$ is a positive least energy solution of (\ref{eq:2.6}) and $\tau
\in \R$.  \medskip

Now we introduce the notation
$$
\Omega(I) =\{ y\in\Omega;\, V(y)-m_0\in I\}
$$
for an interval $I\subset [0,\inf_{x\in\partial\Omega}V(x)-m_0)$.
We choose a small $\nu_0>0$ such that

\begin{itemize}
\item[(i)] $0<\nu_0<\inf_{x\in\partial \Omega}V(x) -m_0$;
\item[(ii)] $F(|\xi_0|^2)>\half(m_0+\nu_0)\xi_0^2$;

\item[(iii)] $\Omega([0,\nu_0])\subset K_d$, where $K = \{ x \in \Omega \ | \ V(x) = m_0\}$ and $d>0$ is a constant
for which Lemma \ref{claim:5.4} (Section \ref{section:5}) holds.
\end{itemize}

From \cite{BL}  we note that, under our choice of $\nu_0 >0$,  $E(a)$ is attained for $a\in [m_0,m_0+\nu_0]$.  
Clearly
$a\mapsto E(a);\, [m_0,m_0+\nu_0]\to \R$ is continuous and strictly increasing.
Choosing $\nu_0>0$ smaller if necessary, we may assume
    $$
    E(m_0+\nu_0) < 2E(m_0).
    $$
We choose $\ell_0\in (E(m_0+\nu_0), 2E(m_0))$ and we set
    $$  S^{\C}_{a,\ell_0} = \{ U\in H^1(\R^N, \C)\setminus\{ 0\} ; \,
        (L^{\C}_a)'(U)=0,\ L^{\C}_a(U) \leq \ell_0, \ |U(0)|=\max_{x\in\R^N} |U(x)| \}.
    $$
We also define
 $$  \whS_{\nu_0,\ell_0}=\bigcup_{a\in [m_0,m_0+\nu_0]} S^{\C}_{a,\ell_0}.
    $$
Following the proof of  \cite[Proposition 1]{BJ}, we can show that the set
 $\whS_{\nu_0,\ell_0}$ is compact in $H^1(\R^N, \C)$ and that its elements have a uniform exponential
decay. Namely that there exist $C$, $c>0$ such that
\begin{equation}\label{expdecay}
      |U(x)| +  \, \abs{\nabla U(x)} \leq C \exp(-c\abs x) \quad \hbox{for all} \quad  U\in \whS_{l_0,\nu_0}.
\end{equation}
By \cite[Lemma 2.3]{CJS}, each element of $ S^{\C}_{m_0,E(m_0)}$ is of the form $e^{i \tau}\omega$ 
where $\tau \in \R$ and $\omega$ is a real least energy solution of \eqref{eq:2.6}. Thus
    $$  P_0(\omega)=1,
    $$
where $P_0$ is defined as
\begin{equation}\label{eq:2.13}
        P_0(u) 
        =\left( {N\int_{\R^N} \frac{1}{2}F(\abs u^2)-{m_0\over 2}\abs u^2 \,dx
            \over {N-2\over 2}\norm{\nabla \abs u}_2^2}\right)^\half.
\end{equation}
We note that $P_0(\omega(\frac{x}{s}))=s$ and

\begin{lemma}\label{pohozaev}
Suppose that $u\in H^1(\R^N,\C)$ satisifes $P_0(u)\in (0,\sqrt{\frac{N}{N-2}})$.  Then
    $$  L_{m_0}(|u|) \geq g(P_0(u)) E(m_0),
    $$
where
    \be\label{eq:2.14}
    g(t)=\frac{1}{2} (Nt^{N-2}-(N-2)t^N).
    \ee
\end{lemma}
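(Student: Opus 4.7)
The plan is to reduce the inequality to an explicit computation for the real-valued function $v := |u| \in H^1(\R^N,\R)$ and then invoke a mountain-pass characterization of $E(m_0)$. Both sides of the claimed inequality depend on $u$ only through $|u|$, so this reduction is immediate. Writing $A := \|\nabla v\|_2^2$ and $B := \int_{\R^N}\bigl(\half F(v^2) - \frac{m_0}{2}v^2\bigr)\,dx$, and setting $t := P_0(u)=P_0(v) \in (0,\sqrt{N/(N-2)})$, the definition \eqref{eq:2.13} of $P_0$ gives the algebraic identity $B = \frac{(N-2)At^2}{2N}$; in particular $B>0$. Inserting this into $L_{m_0}(v) = \half A - B$ yields
\[
L_{m_0}(v) = \frac{A}{2N}\bigl(N-(N-2)t^2\bigr),
\]
which is strictly positive on the admissible range of $t$.

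Next I exploit the dilation family $v_s(x) := v(x/s)$, $s>0$, for which a change of variables gives $L_{m_0}(v_s) = \frac{s^{N-2}}{2}A - s^N B$ and $P_0(v_s)=s\,t$. Elementary calculus shows that $s\mapsto L_{m_0}(v_s)$ attains its unique maximum at $s^\ast = 1/t$, with
\[
\max_{s>0} L_{m_0}(v_s) = L_{m_0}(v_{1/t}) = \frac{A}{N\,t^{N-2}}.
\]
Since $B>0$, $L_{m_0}(v_s)\to -\infty$ as $s\to+\infty$, so for $R$ large enough $L_{m_0}(v_R)<0$, and the curve $\tau\mapsto v_{R\tau}$ on $[0,1]$ (extended continuously by $0$ at $\tau=0$) is an admissible path for the mountain-pass characterization of the least energy level of the real problem \eqref{eq:2.6} with $a=m_0$; under the Berestycki--Lions conditions this mountain-pass level coincides with $E(m_0)$ (Jeanjean--Tanaka; see also \cite{BL}). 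Hence $E(m_0) \leq A/(N t^{N-2})$, equivalently $A \geq N E(m_0)\,t^{N-2}$, and combining with the first display gives
\[
L_{m_0}(v) \;\geq\; \frac{E(m_0)\,t^{N-2}}{2}\bigl(N-(N-2)t^2\bigr) \;=\; E(m_0)\,g(t).
\]

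The only nontrivial step is the upper bound $E(m_0) \leq A/(N t^{N-2})$; everything else is bookkeeping with the explicit scaling. An equivalent and more direct route, avoiding any path construction, is to note that $P_0(v_{1/t})=1$ and invoke the Pohozaev-manifold characterization
\[
E(m_0) \;=\; \inf\{L_{m_0}(w) : w\in H^1(\R^N,\R)\setminus\{0\},\ P_0(w)=1\},
\]
which is standard for Berestycki--Lions-type problems.
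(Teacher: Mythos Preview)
Your proof is correct and matches the paper's approach: the paper invokes the scaling identity $L_{m_0}(|u(\cdot/s)|)=\frac{s^{N-2}}{2}\|\nabla|u|\|_2^2-s^N B$ together with the Pohozaev-manifold characterization $E(m_0)=\inf\{L_{m_0}(w):w\neq 0,\ P_0(w)=1\}$, and refers to \cite{JT1} and \cite[Lemma~2.1]{CJT} for the remaining algebra. Your write-up makes that algebra explicit, and your ``more direct route'' at the end (rescale so that $P_0(v_{1/t})=1$ and compare with $E(m_0)$) is precisely the paper's argument; the preliminary mountain-pass path construction is unnecessary but harmless.
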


\begin{proof}
By the scaling property
    $$  L_{m_0}(|u(\frac{x}{s})|) = \frac{s^{N-2}}{2}\| \nabla|u|\|_2^2
        +s^N\left( \frac{m_0}{2}\| u\|_2^2 -\frac{1}{2} \int_{\R^N} F(|u|^2)\, dx\right)
    $$
and the characterization of $E(m_0)$
    $$  E(m_0)=\inf \{ L_{m_0}(u);\, u\in H^1(\R^N,\R)\setminus\{ 0\},\, P_0(u)=1\},
    $$
we can deduce the conclusion of Lemma \ref{pohozaev}.  See \cite{JT1} and \cite[Lemma 2.1]{CJT}.
\end{proof}

We claim that, as $\ell\to E(m_0)$ and $\nu\to 0$, $\widehat S_{\nu,\ell}$ shrinks to 
$S_{m_0, E(m_0)}^{\C}$ in $H^1(\R^N,\C)$. More precisely we have
\begin{equation}\label{eq:2.15}
\displaystyle
    \lim_{\nu \to 0, \ell \to E(m_0)} \sup_{{\widetilde U} \in \whS_{\nu,\ell}} 
    \inf_{U \in S_{m_0,E(m_0)}^{\C}} ||U-{\widetilde U}||_{H^1} =0.
\end{equation}
In fact, suppose $\nu_n>0$, $\ell_n>E(m_0)$ and $U_n\in \whS_{\nu_n,\ell_n}$ satisfy
$\nu_n\to 0$ and $\ell_n\to E(m_0)$, then by the compactness of $\whS_{\nu,\ell}$ for each
$\nu\geq 0$, $\ell\geq E(m_0)$, $U_n$ converges to some $U\in S_{m_0,E(m_0)}^{\C}$ in $H^1(\R^N,\C)$.
Thus \eqref{eq:2.15} holds.


As a consequence of (\ref{eq:2.15}) for $\ell_0$ close to $E(m_0)$ and $\nu_0 >0$ small, we have
\begin{equation}\label{estP}
    P_0(U) \in \Big(\half,\sqrt{N\over N-1}\Big) \quad \hbox{for all}\ U\in \widehat S_{\nu_0, \ell_0}.
\end{equation}
We fix such $\ell_0$ and $\nu_0$ and we write $\widehat S=\widehat S_{\nu_0,\ell_0}$. \medskip

In what follows, we try to find our
critical points in the following bounded subsets of $H_\e$:
    $$  \calS_\e(r)= \{ e^{i A(\e p)(x-p)} U(x-p) + \varphi(x);\, \e p \in \overline \Omega,  
        \ U\in\whS,\ \norm\varphi_{\e}<r\}
    $$
for a $r>0$.

\subsection{\label{subsection:2.2} A  Pohozaev map in $\calS_\e(r)$}
First we give an equi-continuity result of $u\mapsto |u|;\, H_\e\to H^1(\R^N,\R)$.

\begin{lemma}\label{claim:2.4}
For any $r>0$ there exists $r_{**}>0$ such that for small $\e>0$
    \begin{equation}\label{eq:2.17}
    \| |u(x)| -|U(x-p)|\|_{H^1} <r
    \end{equation}
for any $u(x)=e^{iA(\e p)(x-p)}U(x-p)+\varphi(x)\in \calS_\e(r_{**})$ with $\e p\in\overline\Omega$,
$U\in\widehat S$, $\|\varphi\|_\e<r_{**}$.
\end{lemma}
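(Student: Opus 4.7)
My plan is to argue by contradiction: if the statement fails for some $r>0$, there exist sequences $\e_n\to 0^+$, $p_n$ with $\e_n p_n\in\overline\Omega$, $U_n\in\widehat S$, and $\varphi_n\in H_{\e_n}$ with $\norm{\varphi_n}_{\e_n}\to 0$ such that, setting $u_n=e^{iA(\e_n p_n)(x-p_n)}U_n(x-p_n)+\varphi_n$, we have $\norm{|u_n|-|U_n(\cdot-p_n)|}_{H^1}\ge r$. My first step is a magnetic translation: set
\[
\tilde u_n(x)=e^{-iA(\e_n p_n)x}u_n(x+p_n)=U_n+\tilde\varphi_n,\qquad \tilde\varphi_n(x)=e^{-iA(\e_n p_n)x}\varphi_n(x+p_n),
\]
and introduce the shifted vector potential $\tilde A_{\e_n}(x)=A(\e_n(x+p_n))-A(\e_n p_n)$. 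A direct gauge computation yields $|(\nabla/i-\tilde A_{\e_n})\tilde\varphi_n(x)|=|(\nabla/i-A_{\e_n})\varphi_n(x+p_n)|$ a.e., so the $\tilde A_{\e_n}$-magnetic $H^1$-norm of $\tilde\varphi_n$ equals $\norm{\varphi_n}_{\e_n}$ and tends to $0$; a change of variable also gives $\norm{|u_n|-|U_n(\cdot-p_n)|}_{H^1}=\norm{|\tilde u_n|-|U_n|}_{H^1}$, so it suffices to show the latter tends to zero.

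Up to subsequences, the compactness of $\widehat S$ in $H^1(\R^N,\C)$ gives $U_n\to U_*\in S^{\C}_{m_0,E(m_0)}$ strongly in $H^1$, and $\e_n p_n\to p_0\in\overline\Omega$. By continuity of $A$, $\tilde A_{\e_n}(x)\to 0$ pointwise; combining the exponential growth bound on $A$ from (A1) with the uniform exponential decay \eqref{expdecay} of $\widehat S$ produces, for $\e_n$ small, an integrable common bound for $|\tilde A_{\e_n}|^2|U_n|^2$ and $|\tilde A_{\e_n}||U_n||\nabla U_n|$. Dominated convergence together with $U_n\to U_*$ in $H^1$ then yields
\[
\Bigl\|\Bigl(\frac{\nabla}{i}-\tilde A_{\e_n}\Bigr)U_n\Bigr\|_2\to\norm{\nabla U_*}_2=\norm{\nabla|U_*|}_2,
\]
the last equality from \cite[Lemma 2.3]{CJS}, which identifies $U_*$ as $e^{i\tau}\omega$ with $\omega$ real. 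Combined with $\|(\nabla/i-\tilde A_{\e_n})\tilde\varphi_n\|_2\to 0$ and Cauchy--Schwarz on the cross term, this implies $\|(\nabla/i-\tilde A_{\e_n})\tilde u_n\|_2\to\norm{\nabla|U_*|}_2$.

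To conclude, the diamagnetic inequality \eqref{eq:2.3} (applied with vector potential $\tilde A_{\e_n}$) gives $\norm{\nabla|\tilde u_n|}_2\le\|(\nabla/i-\tilde A_{\e_n})\tilde u_n\|_2$, whence $\limsup\norm{\nabla|\tilde u_n|}_2\le\norm{\nabla|U_*|}_2$. On the other hand, $|\tilde u_n|\to|U_*|$ in $L^2$ follows from $\norm{|\tilde u_n|-|U_n|}_2\le\norm{\tilde\varphi_n}_2\le C\norm{\varphi_n}_{\e_n}\to 0$ together with $|U_n|\to|U_*|$ in $L^2$; since $|\tilde u_n|$ is bounded in $H^1$, we obtain $|\tilde u_n|\wlimit|U_*|$ in $H^1$, and weak lower semi-continuity supplies the matching lower bound for $\liminf\norm{\nabla|\tilde u_n|}_2$. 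Norm plus weak convergence upgrade to strong $H^1$ convergence $|\tilde u_n|\to|U_*|$; since also $|U_n|\to|U_*|$ strongly in $H^1(\R^N,\R)$ by Lemma \ref{continuity} specialized to $A=0$, we get $\norm{|\tilde u_n|-|U_n|}_{H^1}\to 0$, a contradiction. The main obstacle is the $n$-dependence of the shifted magnetic potentials $\tilde A_{\e_n}$: only the combined information of the exponential growth bound from (A1) and the uniform exponential decay of $\widehat S$ forces the cross terms containing $\tilde A_{\e_n}$ to vanish in the limit, thereby matching the upper bound coming from the diamagnetic inequality to the lower bound from weak lower semi-continuity.
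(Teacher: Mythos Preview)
Your argument has a genuine gap at the point where you assert that the subsequential limit $U_*$ of $(U_n)\subset\widehat S$ lies in $S^{\C}_{m_0,E(m_0)}$. Compactness of $\widehat S$ only gives $U_*\in\widehat S=\bigcup_{a\in[m_0,m_0+\nu_0]}S^{\C}_{a,\ell_0}$, i.e.\ $U_*$ is a critical point with energy at most $\ell_0$, not necessarily a \emph{least energy} solution. The structural identity $U_*=e^{i\tau}\omega$ with $\omega$ real (and hence $\|\nabla U_*\|_2=\|\nabla|U_*|\|_2$) is established in \cite[Lemma 2.3]{CJS} only for least energy solutions; for a generic element of $\widehat S$ one may have $\|\nabla U_*\|_2>\|\nabla|U_*|\|_2$ strictly. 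In that case your two bounds
\[
\liminf_n\|\nabla|\tilde u_n|\|_2\ \ge\ \|\nabla|U_*|\|_2
\qquad\text{and}\qquad
\limsup_n\|\nabla|\tilde u_n|\|_2\ \le\ \|\nabla U_*\|_2
\]
no longer pinch, and you cannot upgrade weak to strong $H^1$ convergence of $|\tilde u_n|$.

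The paper circumvents this by a different mechanism that does not require $\|\nabla U_*\|_2=\|\nabla|U_*|\|_2$: it first obtains $H^1_{loc}$ convergence of $|\tilde u_n|$ to $|U_0|$ (hence a.e.\ convergence of $\nabla|\tilde u_n|$), and then observes that $(\nabla/i-\tilde A_{\e_n})\tilde u_n$ converges in $L^2$, so after passing to a subsequence there is an $L^2$ majorant $h$ with $|\nabla|\tilde u_n||\le h$ by the diamagnetic inequality. Lebesgue's dominated convergence theorem then gives $\nabla|\tilde u_n|\to\nabla|U_0|$ in $L^2$ directly. Your ``norm plus weak'' strategy is elegant when the diamagnetic inequality is saturated at the limit, but here that saturation is precisely what is in doubt; the DCT route trades that requirement for pointwise information, which is available.
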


\begin{proof}
It suffices to show
    \begin{eqnarray}
    && \ \| |u_n(x)| -|U_n(x-p_n)|\|_2 \to 0, \label{eq:2.18}\\
    && \ \| \nabla|u_n(x)| -\nabla|U_n(x-p_n)|\|_2 \to 0 \label{eq:2.19}
    \end{eqnarray}
for $u_n=e^{iA(\e_n p_n)(x-p_n)}U_n(x-p_n)+\varphi_n(x)\in \calS_{\e_n}(r_n)$ with
$\e_n\to 0$, $r_n\to 0$, $\e_n p_n\in \overline\Omega$, $U_n\in \widehat S$ and
    \begin{equation}\label{eq:2.20}
    \| \varphi_n\|_{\e_n} <r_n\to 0.
    \end{equation}
Since $\widehat S$ is compact in $H^1(\R^N,\C)$, we may assume that $U_n\to U_0\in \widehat S$
and $\e_n p_n\to p_0\in \overline\Omega$ as $n\to \infty$.  We proceed in several steps.

\smallskip

\noindent
{\sl Step 1: \eqref{eq:2.18} holds.}

\smallskip

\noindent
By \eqref{eq:2.4} and \eqref{eq:2.20},
    $$  \| u_n(x)-e^{iA(\e_n p_n)(x-p_n)}U_n(x-p_n)\|_2 = \|\varphi_n\|_2 \leq \|\varphi_n\|_{\e_n} \to 0.
    $$
Since $u\mapsto |u|;\, L^2(\R^N,\C)\to L^2(\R^N,\R)$ is continuous, we have
    $$  \| |u_n(x)| -|U_n(x-p_n)|\|_2 = \| |u_n(x)|-|e^{iA(\e_n p_n)(x-p_n)}U_n(x-p_n)|\|_2 \to 0.
    $$

\smallskip

\noindent
{\sl Step 2: $\| e^{iA(\e_n p_n)(x-p_n)}(U_n(x-p_n)-U_0(x-p_n))+\varphi_n(x)\|_{\e_n}\to 0$.}

\smallskip

\noindent
Observe that
    \begin{eqnarray*}
    &&\| \left(\frac{1}{i}\nabla-A(\e_nx)\right) (e^{iA(\e_n p_n)(x-p_n)}(U_n(x-p_n)-U_0(x-p_n))) \|_2 \\
    &=& \| \left(\frac{1}{i}\nabla-A(\e_nx+\e_n p_n)\right) (e^{iA(\e_n p_n)x}(U_n(x)-U_0(x))) \|_2 \\
    &=& \| A(\e_n p_n)e^{iA(\e_n p_n)x} (U_n-U_0)
        + \frac{1}{i} e^{iA(\e_n p_n)x}(\nabla U_n-\nabla U_0)\\
    && \ - A(\e_n x+\e_n p_n) e^{iA(\e_n p_n)x}(U_n-U_0)\|_2 \\
    &=& \| (A(\e_n p_n)-A(\e_n x+\e_n p_n))(U_n-U_0)+\frac{1}{i} (\nabla U_n-\nabla U_0)\|_2 \to 0 \quad \text{as}\ n\to \infty.
    \end{eqnarray*}
Here we have use the fact that  $U_n\to U_0$ in $H^1(\R^N, \C)$ and that the elements in $\widehat S$ have
a uniform exponential decay. Clearly also 
    $$  \| e^{iA(\e_n p_n)(x-p_n)}(U_n(x-p_n)-U_0(x-p_n))\|_2 
        = \| U_n-U_0\|_2 \to 0.
    $$
By \eqref{eq:2.20}, we have the conclusion of Step 2.

\smallskip

\noindent
{\sl Step 3: $|e^{iA(\e_n p_n)(x)}U_n(x)+\varphi_n(x+p_n)| \to |U_0(x)|$ in $H^1_{loc}(\R^N,\R)$.\\
In particular, after taking a subsequence 
    $$  \nabla |e^{iA(\e_n p_n)(x)}U_n(x)+\varphi_n(x+p_n)| \to \nabla|U_0(x)|\quad \text{a.e. in}\ \R^N.
    $$
}

\smallskip

\noindent
Using notation
    $$  \|v\|_{\e_n,1,A(\cdot+\e_n p_n)}^2 = \| (\frac{1}{i}\nabla -A(\e_nx+\e_n p_n))v\|_2^2+\| v\|_2^2,
    $$
we have by Step 2
    \begin{eqnarray}    
        &&\| e^{iA(\e_n p_n)x}(U_n(x)-U_0(x))+\varphi_n(x+p_n)\|_{\e_n,1,A(\cdot+\e_n p_n)} \nonumber\\
        &=& \| e^{iA(\e_n p_n)(x-p_n)}(U_n(x-p_n)-U_0(x-p_n))+\varphi_n(x)\|_{\e_n} \nonumber\\
        &\to& 0 \qquad \text{as} \ n\to \infty. \label{eq:2.21}
    \end{eqnarray}
As in \eqref{eq:2.4}, we get
    $$  e^{iA(\e_n p_n)x}(U_n(x)-U_0(x))+\varphi_n(x+p_n) \to 0 \quad \text{in} \ H^1_{loc}(\R^N,\C),
    $$
from which the conclusion of Step 3 follows.

\smallskip

\noindent
{\sl Step 4: \eqref{eq:2.19} holds.}

\smallskip

\noindent
By \eqref{eq:2.21},
    $$  \| \left(\frac{1}{i}\nabla -A(\e_n x+\e_n p_n)\right) (e^{iA(\e_n p_n)x}(U_n(x)-U_0(x))+\varphi_n(x+p_n))\|_2
        \to 0.
    $$
Thus $\left(\frac{1}{i}\nabla -A(\e_n x+\e_n p_n)\right) (e^{iA(\e_n p_n)x}U_n(x)+\varphi_n(x+p_n))$ converges to\\
$\left(\frac{1}{i}\nabla -A(p_0)\right) (e^{iA(p_0)x}U_0(x))$ in $L^2(\R,\C)$.  
Therefore, there exists a $h(x)\in L^2(\R^N)$  such that
    $$  \left| \left(\frac{1}{i}\nabla -A(\e_n x+\e_n p_n)\right) (e^{iA(\e_n p_n)x}U_n(x)+\varphi_n(x+p_n)) \right| 
        \leq h(x).
    $$
By the diamagnetic inequality, we have
    \begin{eqnarray*}
    &&\ \left| \nabla |e^{iA(\e_n p_n)x}U_n(x)+\varphi_n(x+p_n)) | 
        -\nabla |U_0(x)|\right| \\
    &\leq& \left| \left(\frac{1}{i}\nabla -A(\e_n x+\e_n p_n)\right) (e^{iA(\e_n p_n)x}U_n(x)+\varphi_n(x+p_n)) \right| 
        + |\nabla U_0(x)| \\
    &\leq& h(x)+ |\nabla U_0(x)| \in L^2(\R^N).
    \end{eqnarray*}
Therefore, by Lebesgue theorem, it follows from Step 3 that
    \begin{eqnarray*}
    \| \nabla|u_n(x+p_n)|-\nabla|U_0(x)|\|_2 
    &=& \| \nabla| e^{iA(\e_n p_n)x}U_n(x)+\varphi_n(x+p_n)) | - \nabla|U_0(x)|\|_2 \\
    &\to & 0.
    \end{eqnarray*}
which is nothing but \eqref{eq:2.19}.
\end{proof}

For a later use we have the following

\begin{corollary}\label{pozz}
There exists $r_0>0$ such that for $\epsilon>0$ small, $P_0:\, {\cal S}_\epsilon(r_0) \to \R$ is 
continuous and 
\begin{equation}\label{estfP}
    P_0(u)\in \Big(0,\sqrt{N\over N-1}\Big) \quad \hbox{for all} \ u\in {\cal S}_\epsilon(r_0).
\end{equation}
\end{corollary}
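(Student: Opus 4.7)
The plan is to factor $P_0$ through the modulus map: for $u \in \calS_\e(r_0)$ write $P_0(u) = P_0(|u|)$, and note that $u \mapsto |u|$ is continuous from $H_\e$ into $H^1(\R^N,\R)$ by Lemma \ref{continuity}. It therefore suffices to show that the moduli $|u|$ with $u \in \calS_\e(r_0)$ lie in a subset of $H^1(\R^N,\R)$ on which $P_0$ is well-defined, continuous and takes values in $(0,\sqrt{N/(N-1)})$.

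The key input is Lemma \ref{claim:2.4}: given any prescribed $r > 0$, one can select $r_{**} > 0$ such that every $u \in \calS_\e(r_{**})$ satisfies $\| |u| - |U(\cdot - p)| \|_{H^1} < r$ for its representing pair $(U,p)$ with $U \in \whS$ and $\e p \in \overline\Omega$. Since $P_0$ is translation invariant, $P_0(|U(\cdot-p)|) = P_0(U)$, and by \eqref{estP} this value lies in $(\half, \sqrt{N/(N-1)})$ for every $U \in \whS$. Compactness of $\whS$ in $H^1(\R^N,\C)$ together with the fact that each $U \in \whS$ is a nontrivial limiting solution then yields uniform positive lower bounds on $\|\nabla|U|\|_2$ and on the numerator $N\intRN \frac{1}{2} F(|U|^2) - \frac{m_0}{2}|U|^2 \, dx$.

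With these uniform bounds secured, I would choose $r > 0$ so small that on the $H^1$-ball of radius $r$ around any such $|U(\cdot-p)|$: (i) the denominator $\frac{N-2}{2}\|\nabla v\|_2^2$ remains at least half of $\frac{N-2}{2}\|\nabla|U|\|_2^2$; (ii) the numerator stays positive and close to that of $|U|$; and (iii) the resulting value of $P_0$ differs from $P_0(U)$ by less than the distance from the compact set $P_0(\whS)$ to the endpoints of $(0,\sqrt{N/(N-1)})$. Taking $r_0$ equal to the corresponding $r_{**}$ delivered by Lemma \ref{claim:2.4} then yields both the range inclusion \eqref{estfP} and, via composition with Lemma \ref{continuity}, the continuity of $P_0$ on $\calS_\e(r_0)$.

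The main obstacle is maintaining uniform positivity of the denominator of $P_0^2$ under the small $H^1$-perturbations produced by Lemma \ref{claim:2.4}. This is precisely where the compactness of $\whS$ is essential: since each $U \in \whS$ is a nontrivial solution of the complex limit problem, the Pohozaev identity \eqref{eq:poho} forces $\|\nabla|U|\|_2$ to stay uniformly bounded away from zero, and this bound survives an $H^1$-perturbation of size $r$ once $r$ is chosen sufficiently small relative to the compactness estimates. The continuity of $v \mapsto \intRN F(|v|^2)\, dx$ and $v \mapsto \intRN |v|^2\, dx$ on $H^1(\R^N,\R)$ is standard under (f3) and the Sobolev embedding, so once the denominator bound is in hand the full continuity of $P_0$ on $\calS_\e(r_0)$ follows.
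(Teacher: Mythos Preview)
Your proposal is correct and follows essentially the same route as the paper: use Lemma~\ref{claim:2.4} to place $|u|$ within a small $H^1$-neighborhood of some $|U(\cdot-p)|$ with $U\in\whS$, invoke \eqref{estP} and the continuity of the numerator and denominator of $P_0^2$ under $H^1$-convergence to obtain \eqref{estfP}, and then deduce continuity of $P_0$ on $\calS_\e(r_0)$ from Lemma~\ref{continuity}. One minor remark: the uniform lower bound on $\|\nabla|U|\|_2$ over $\whS$ follows already from compactness of $\whS$ together with \eqref{estP} (both numerator and denominator of $P_0^2$ are continuous on $H^1$ and positive on $\whS$), so the appeal to the Pohozaev identity is not needed there.
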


\begin{proof} 
Since $\| |u|-|U|\|_{H^1}\to 0$ implies $\int_{\R^N} F(|u|^2)\, dx\to \int_{\R^N} F(|U|^2)\, dx$,
$\| u\|_2^2 \to \| U\|_2^2$ and $\|\nabla|u|\|_2^2 \to \|\nabla|U|\|_2^2$,
by Lemma \ref{claim:2.4} there exists  $r_0>0$ such that \eqref{estP} holds for $\epsilon>0$ small.
The continuity of $P_0:\, \calS_\e(r_0)\to \R$ follows from Lemma \ref{continuity}.
\end{proof}

\medskip

\begin{remark}
We remark that there does not exist a constant $C>0$ with the following property:
	$$	\| \nabla|u+\varphi|-\nabla|u| \|_2 \leq C(\| \nabla|\varphi|\|_2+\|\varphi\|_2)
		\quad \text{for all}\ u,\varphi\in H_\e.
	$$
Thus Lemma \ref{claim:2.4} is not a direct consequence from the diamagenetic inequality.
To see such an inequality does not hold, for $n\in\N$ we set $u$, $\varphi_n\in H^1(\R,\C)$ by
	$$	u(x)=\begin{cases}
			1 &\text{if}\ |x|\leq 1, \\
			2-|x| &\text{if}\ |x|\in (1,2],\\
			0 &\text{otherwise},
			\end{cases}
		\quad
		\varphi_n(x)=\begin{cases}
			e^{2\pi i nx} &\text{if}\ |x|\leq 1, \\
			2-|x| &\text{if}\ |x|\in (1,2],\\
			0 &\text{otherwise}.
			\end{cases}
	$$
Then, we have
	$$	\| \nabla|u+\varphi_n| -\nabla|u|\|_{L^2(\R)} \to \infty, \quad
		\sup_{n\in\N} (\| \nabla|\varphi_n|\|_{L^2(\R)} + \|\varphi_n\|_{L^2(\R)}) <\infty.
	$$
We can easily extend this example to general dimension $N$.

\end{remark}

\subsection{\label{subsection:2.3} A barycenter map in $\calS_\e(r)$}

Following \cite{BT1, BT2} we introduce a center of mass in $\calS_\e(r)$.

\begin{lemma}\label{claim:2.6}
There exist $r_0$, $R_0$, $\e_0 >0$,  such that for any $\e \in (0, \e_0)$ there exists a function 
$\Upsilon_\e:\, \calS_\e(r_0)\to\R^N$
such that
    $$  \abs{\Upsilon_\e(u)-p} \leq 2R_0
    $$
for all $u(x)= e^{i A(\e p)(x-p) }U(x-p)+\varphi(x)\in \calS_\e(r_0)$ with $p\in\R^N$,  
$\e p \in \overline\Omega $, $U\in\whS$,
$\|\varphi\|_{\e}\leq r_0$.  Moreover, $\Upsilon_\e$ has the following properties
\begin{itemize}
\item[(i)] $\Upsilon_\e$ is shift equivariant, that is,
    $$  \Upsilon_\e(u(x-y))=\Upsilon_\e(u(x))+y   \quad \text{for all}\  u\in\calS_\e(r_0)\
        \hbox{and}\ y\in\R^N.
    $$
\item[(ii)] $\Upsilon_\e$ is $\mathbb{S}^1$-invariant, that is, 
    $$  \Upsilon_\e(e^{i\tau}u)=\Upsilon_\e(u)   \quad 
        \text{for all}\ u\in\calS_\e(r_0)\ \text{and}\ e^{i\tau}\in \mathbb{S}^1.
    $$
\item[(iii)] 
$\Upsilon_\e:\, \calS_\e(r_0)\subset H_\e\to \R^N$ is a continuous function.  
Moreover, $\Upsilon_\e$ is a locally Lipschitz continuous function of $|u|$ in the following
sense:  $\Upsilon_\e$ satisfies $\Upsilon_\e(u)=\Upsilon_\e(|u|)$ for all $u\in\calS_\e(r_0)$
and there exist constants  $C_1$, $C_2>0$ such that
    \be\label{eq:2.23}  
		\abs{\Upsilon_\e(u)-\Upsilon_\e(v)} \leq C_1\norm{|u|-|v|}_{H^1}
        \quad \text{for all}\   u, v\in\calS_\e(r_0) \  \hbox{with}\
        \norm{|u|-|v|}_{H^1}\leq C_2.
    \ee
\end{itemize}
\end{lemma}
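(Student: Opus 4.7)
The plan is to build $\Upsilon_\e$ as a thresholded centroid of a local density of $|u|^2$. I would introduce the auxiliary scalar
$$\mu(w)(y) = \int_{B_1(y)} |w(x)|^2\, dx,$$
which depends only on $|w|$ and is therefore automatically $\mathbb{S}^1$-invariant. Since $\whS$ is compact in $H^1(\R^N,\C)$ and enjoys the uniform exponential decay \eqref{expdecay}, I can fix $c_0 > 0$ and $R_0 > 0$ such that $\max_y \mu(U)(y) \geq 3c_0$ for every $U \in \whS$, while $\mu(U)(y) \leq c_0$ whenever $|y| \geq R_0$. Setting $\hat u(y) = [\mu(u)(y) - c_0]^+$, I would then define
$$\Upsilon_\e(u) = \frac{\int_{\R^N} y\,\hat u(y)\, dy}{\int_{\R^N} \hat u(y)\, dy}$$
whenever the denominator is positive.

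The pivotal step is to show that every $u \in \calS_\e(r_0)$ lies in the domain of $\Upsilon_\e$ and that $\supp(\hat u) \subset B_{2R_0}(p)$, where $p$ is the center in the representation of $u$. This is exactly where Lemma \ref{claim:2.4} enters: by choosing $r_0$ and $\e$ small one forces $\||u(\cdot + p)| - |U|\|_{H^1}$ to be arbitrarily small, so that $\mu(u)(y+p)$ is uniformly close to $\mu(U)(y)$; the thresholds $3c_0$ and $c_0$ survive, giving both the support inclusion and the bound $|\Upsilon_\e(u) - p| \leq 2R_0$. Properties (i) and (ii) then come for free: translations commute with $\mu$ and with the centroid, and $|e^{i\tau}u| = |u|$. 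For (iii), since $\Upsilon_\e$ factors through $|u|^2$, continuity on $H_\e$ reduces via Lemma \ref{continuity} to the standard continuity of the centroid on $H^1(\R^N,\R)$, and \eqref{eq:2.23} follows from a direct quotient-rule estimate using that $w \mapsto \mu(w)$ is locally Lipschitz from $L^2$ into bounded continuous functions, combined with the lower bound on the denominator.

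The main technical point I anticipate is maintaining a uniform positive lower bound on $\int \hat u\, dy$ over the whole of $\calS_\e(r_0)$ and, for the Lipschitz estimate, over a $C_2$-neighborhood of it in the $H^1$-norm of the moduli. This is precisely what forces the choice of $C_2$ and $r_0$ small relative to the compactness modulus of $\whS$; once this is secured the constant $C_1$ is finite and the three stated properties all drop out of the construction.
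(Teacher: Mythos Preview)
Your construction is correct and follows the same overall template as the paper---a weighted centroid depending only on $|u|$, localized via the compactness and uniform exponential decay of $\whS$, with Lemma \ref{claim:2.4} supplying the key smallness of $\||u(\cdot+p)|-|U|\|_{H^1}$---but the choice of weight is genuinely different. The paper takes as density
\[
d(q,u)=\psi\Bigl(\inf_{\tilde U\in\whS}\bigl\|\,|u|-|\tilde U(\cdot-q)|\,\bigr\|_{H^1(|x-q|\leq R_0)}\Bigr),
\]
i.e.\ it measures how well $|u|$ matches a translate of some profile in $\whS$ on a ball around $q$, and then sets $\Upsilon_\e(u)=\int q\,d(q,u)\,dq\big/\int d(q,u)\,dq$. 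Your density $\hat u(y)=[\mu(u)(y)-c_0]^+$ is more elementary: it ignores the shape of the profiles entirely and only detects where the local $L^2$ mass of $|u|$ exceeds a fixed threshold. Both yield shift equivariance, $\mathbb{S}^1$-invariance, and the localization $|\Upsilon_\e(u)-p|\leq 2R_0$ by the same mechanism (support of the density trapped near $p$, denominator bounded below). Your version has the advantage that the Lipschitz estimate \eqref{eq:2.23} reduces to a clean $L^2$ computation on $\mu$, whereas the paper's $d(q,u)$ involves an infimum over $\whS$ and requires the cutoff $\psi$ to get smoothness. The paper's choice, on the other hand, is closer to the Byeon--Tanaka construction and is slightly more robust: it would still localize correctly even if $\whS$ contained profiles whose local $L^2$ mass failed to concentrate (not an issue here, but it explains the design). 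One small point to tidy in your write-up: to guarantee $\hat u(y)=0$ for $|y-p|\geq 2R_0$ \emph{after} the perturbation, choose $R_0$ so that $\mu(U)(y)\leq c_0/2$ (not $c_0$) for $|y|\geq R_0$; the gap absorbs the $o(1)$ coming from Lemma \ref{claim:2.4}.
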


\medskip

\begin{proof}
We set $r_*=\min_{U\in\whS}\| |U| \|_{H^1}>0$ and choose $R_0>1$ such that for $U\in\whS$
    $$  \|\, |U|\, \|_{H^1(\abs x\leq R_0) } > {3\over 4}r_*  \quad \mbox{and} \quad
        \|\, |U|\, \|_{H^1(\abs x\geq R_0)} < {1\over 8}r_*.
    $$
This is possible by the uniform exponential decay (\ref{expdecay}).
For $u\in H^1(\R^N, \C)$ and $q\in\R^N$, we define
    $$  d(q,u)=\psi\left(\inf_{\tilde U\in\whS} \norm{|u(x)|-|\tilde U(x-q)|}_{H^1(\abs{x-q}\leq R_0)}\right),
    $$
where $\psi \in C_0^\infty(\R,\R)$ is such that
    \begin{eqnarray*}
        &&\psi(r)=\begin{cases}    1   &r\in [0,{1\over 4}r_*],\\
                            0   &r\in [\half r_*,\infty),\end{cases}\\
        &&\psi(r)\in [0,1]\quad \hbox{for all}\ r\in [0,\infty).
    \end{eqnarray*}
Now by Lemma \ref{claim:2.4}, there exists $r_{**}\in (0,\frac{1}{8}r_*]$ such that for $\epsilon>0$ small
    \be \label{eq:2.24}
    \|\, |u(x)|-|U(x-p)|\, \|_{H^1} < \frac{1}{8}r_*
    \ee
for $u(x)=e^{iA(\e p)(x-p)}U(x-p)+\varphi(x)\in\calS_\e(r_{**})$.  We set
    $$  \Upsilon_\e(u)={\displaystyle \intRN q\, d(q,u)\, dq
            \over\displaystyle \intRN d(q,u)\, dq}
        \quad \hbox{for}\quad u\in\calS_\e(r_{**}).
    $$
We shall show that  $\Upsilon_\e$ has the desired property.

\vspace{2mm}

Let $u\in \calS_\e(r_{**})$ and write $u(x)=e^{i A(\e p) (x-p)} U(x-p)+\varphi(x)$
($p\in\R^N$, $\e p \in \overline\Omega$, $U\in \whS$, $\norm\varphi_{\e}\leq r_{**}$).

Taking into account that $\norm {|\varphi|}_{H^1} \leq \norm\varphi_{\e}$, we have that for 
$\abs{q-p}\geq 2R_0$ and $\tilde U\in\whS$, we have
    \begin{eqnarray*}
        && \ \norm{|u(x)|-|\tilde U(x-q)|}_{H^1(\abs{x-q}\leq R_0)} \\
        &\geq& \norm{|\tilde U(x-q)| }_{H^1(\abs{x-q}\leq R_0)} - \norm{|u(x)| }_{H^1(\abs{x-q}\leq R_0)}\\
        &\geq& \norm{|\tilde U(x-q)| }_{H^1(\abs{x-q}\leq R_0)} - \norm{|U(x-p)| }_{H^1(\abs{x-q}\leq R_0)}
                -\norm{|u(x)|-|U(x-p)|}_{H^1}\\
        &\geq& \norm{|\tilde U(x-q)|}_{H^1(\abs{x-q} \leq R_0)}
            - \norm{|U(x-p)|}_{H^1(\abs{x-p}\geq R_0)}-{1\over 8}r_*\\
        &>& {3\over 4}r_*-{1\over 8}r_*-{1\over 8}r_*=\half r_*.
    \end{eqnarray*}
Thus $d(q,u)=0$ for $\abs{q-p}\geq 2R_0$.  We can also see that, for small $r>0$
    $$  d(q,u)=1 \quad \hbox{for}\ \abs{q-p}<r.
    $$
Thus $B(p,r)\subset \supp d(\cdot,u)\subset B(p,2R_0)$.
Therefore $\Upsilon_\e(u)$ is well-defined and we have
    $$  \Upsilon_\e(u)\in B(p,2R_0) \quad \hbox{for}\quad  u\in \calS_\e(r_{**}).
    $$
It is clear from the definition that $\Upsilon_\e(u)=\Upsilon_\e(|u|)$ for all $u\in\calS_\e(r_{**})$.
Its shift equivariance, $\mathbb{S}^1$-invariance and locally Lipschitz continuity \eqref{eq:2.23}
can be checked easily.
Thus continuity of $\Upsilon_\e:\, \calS_\e(r_{**})\to \R^N$, where the topology of $\calS_\e(r_{**})$
is induced from $H_\e$, follows from Lemma \ref{continuity}.
\end{proof}

\vspace{3mm}

Using this lemma we have

\begin{lemma}\label{claim:2.7}
There exist $\delta_1>0$, $r_1\in (0,r_0)$ and $\nu_1\in (0,\nu_0)$ such that
for $\epsilon>0$ small
    $$  \Ie(u) \geq E(m_0) +\delta_1
    $$
for all $u\in\calS_\e(r_1)$ with $\epsilon\Upsilon_\e(u)\in\Omega([\nu_1,\nu_0])$.
\end{lemma}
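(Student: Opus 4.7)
The plan is to argue by contradiction. Fix $\nu_1 \in (0, \nu_0)$ and suppose the conclusion fails. Then there exist $\e_n \to 0$, $r_n \to 0$ and $u_n \in \calS_{\e_n}(r_n)$ with $\e_n \Upsilon_{\e_n}(u_n) \in \Omega([\nu_1, \nu_0])$ while $I_{\e_n}(u_n) \leq E(m_0) + 1/n$. Write $u_n = e^{iA(\e_n p_n)(x-p_n)} U_n(x - p_n) + \varphi_n$ with $\e_n p_n \in \overline\Omega$, $U_n \in \whS$, $\|\varphi_n\|_{\e_n} < r_n$. The compactness of $\whS$ in $H^1(\R^N, \C)$ and of $\overline\Omega$ yields, along a subsequence, $U_n \to U_0 \in \whS$ in $H^1$ and $\e_n p_n \to p_0 \in \overline\Omega$. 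Since $|\Upsilon_{\e_n}(u_n) - p_n| \leq 2R_0$ by Lemma~\ref{claim:2.6}, we have $\e_n \Upsilon_{\e_n}(u_n) \to p_0$, and continuity of $V$ forces $V(p_0) \in [m_0 + \nu_1, m_0 + \nu_0]$.

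The key step is to identify
\[
\lim_{n\to\infty} I_{\e_n}(u_n) = L^{\C}_{V(p_0)}(U_0).
\]
Setting $u_n^0 := e^{iA(\e_n p_n)(x-p_n)} U_n(x - p_n)$, the quadratic part of $I_{\e_n}$ expands around $u_n^0$ with cross terms controlled by $\|\varphi_n\|_{\e_n} \to 0$ and the uniform boundedness of $\|u_n^0\|_{\e_n}$ (using $\sup V < \infty$); Lemma~\ref{claim:2.4} gives $\||u_n| - |U_n(\cdot - p_n)|\|_{H^1} \to 0$, hence the Sobolev embedding and (f1)--(f3) yield $\int F(|u_n|^2)\, dx \to \int F(|U_0|^2)\, dy$. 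After the change of variable $y = x - p_n$ in $I_{\e_n}(u_n^0)$, the kinetic term becomes
\[
\tfrac{1}{2}\int_{\R^N} \Bigl|\tfrac{1}{i}\nabla U_n - \bigl(A(\e_n y + \e_n p_n) - A(\e_n p_n)\bigr) U_n\Bigr|^2\, dy,
\]
which tends to $\tfrac{1}{2}\|\nabla U_0\|_2^2$: the magnetic perturbation converges to $0$ locally uniformly thanks to (A1), and the contributions from large $|y|$ are killed by the uniform exponential decay \eqref{expdecay}. The potential term similarly converges to $\tfrac{V(p_0)}{2} \|U_0\|_2^2$, establishing the identity.

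It remains to contradict $L^{\C}_{V(p_0)}(U_0) \leq E(m_0)$ by a strict lower bound. By \eqref{eq:2.15}, after further shrinking $\nu_0, \ell_0$ at the outset of Section~\ref{section:2} if necessary, the $H^1$-distance from elements of $\whS$ to $S^{\C}_{m_0, E(m_0)}$ can be made arbitrarily small; pick $\widetilde U = e^{i\tau} \omega \in S^{\C}_{m_0, E(m_0)}$ with $\|U_0 - \widetilde U\|_{H^1}$ very small. A direct computation gives
\[
L^{\C}_{V(p_0)}(\widetilde U) = L^{\C}_{m_0}(\widetilde U) + \frac{V(p_0) - m_0}{2}\|\omega\|_2^2 \geq E(m_0) + \frac{\nu_1\, c_0^2}{2},
\]
where $c_0 := \min\{\|\omega\|_2 : e^{i\tau}\omega \in S^{\C}_{m_0, E(m_0)}\} > 0$ by compactness of $S^{\C}_{m_0, E(m_0)}$ in $H^1$. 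The Lipschitz continuity of $L^{\C}_{V(p_0)}$ on $H^1$-bounded sets (uniform for $V(p_0) \in [m_0, m_0 + \nu_0]$) transports this lower bound to $U_0$ at an arbitrarily small cost, yielding $L^{\C}_{V(p_0)}(U_0) \geq E(m_0) + \nu_1 c_0^2/4$ and contradicting the limit. We may thus take $\delta_1 = \nu_1 c_0^2/4$.

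The principal obstacle is the limit identification of the kinetic term, where the magnetic gauge $e^{iA(\e_n p_n)(x-p_n)}$, the possibly unbounded translation $p_n$, and the small perturbation $\varphi_n$ must be handled simultaneously; this is precisely why the equicontinuity Lemma~\ref{claim:2.4} and the uniform exponential decay \eqref{expdecay} were established beforehand.
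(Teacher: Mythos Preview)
Your contradiction strategy and the limit identification $\lim_n I_{\e_n}(u_n)=L^{\C}_{V(p_0)}(U_0)$ are both correct and constitute a legitimate alternative to the paper's direct argument; the paper instead proves the uniform estimate $\bigl| I_\e(e^{iA(\e p)(x-p)}U(x-p))-L^{\C}_{V(\e p)}(U)\bigr|\to 0$ over $U\in\whS$, $\e p\in\overline\Omega$, and then absorbs the perturbation $\varphi$ at the end.

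There is, however, a genuine gap in your final step. You fix $\nu_1\in(0,\nu_0)$ at the outset and then, in order to make $\|U_0-\widetilde U\|_{H^1}$ small enough, you propose to shrink $\nu_0,\ell_0$ via \eqref{eq:2.15}. But the required smallness, namely $\|U_0-\widetilde U\|_{H^1}<\nu_1 c_0^2/(4C)$, depends on $\nu_1$, which in turn is constrained by $\nu_1<\nu_0$. Shrinking $\nu_0$ forces $\nu_1$ smaller, which tightens the target again; \eqref{eq:2.15} gives no rate, so this loop does not close. In short, an arbitrary choice of $\nu_1$ does not suffice: the lower bound $L^{\C}_{V(p_0)}(U_0)>E(m_0)$ can genuinely fail for $U_0\in S^{\C}_{a,\ell_0}$ with $a$ near $m_0+\nu_0$ and $V(p_0)$ near $m_0+\nu_1$ if $\nu_1$ is taken too small.

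The paper avoids this by choosing $\nu_1$ close to $\nu_0$ so that $E(m_0+\nu_1)-\tfrac12(\nu_0-\nu_1)\overline M>E(m_0)$ with $\overline M=\sup_{U\in\whS}\|U\|_2^2$; then a direct case split on whether $U_0\in S^{\C}_{a,\ell_0}$ has $a\le m_0+\nu_1$ or $a\in[m_0+\nu_1,m_0+\nu_0]$ yields $\inf_{U\in\whS}L^{\C}_{m_0+\nu_1}(U)\ge E(m_0)+3\delta_1$, and hence $L^{\C}_{V(p_0)}(U_0)\ge E(m_0)+3\delta_1$ since $V(p_0)\ge m_0+\nu_1$. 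Plugging exactly this estimate into your contradiction framework (in place of the appeal to \eqref{eq:2.15}) closes the gap.
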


\begin{proof}
We set $\underline M=\inf_{U\in\whS}\norm U_2^2$, $\overline M=\sup_{U\in\whS}\norm U_2^2$. 
It follows from the compactness of $\widehat S$ that $0<\underline M\leq \overline M<\infty$.
For later use in \eqref{eq:2.28} below, we choose $\nu_1\in (0,\nu_0)$ such that
    \be\label{eq:2.25}
        E(m_0+\nu_1)-\half(\nu_0-\nu_1)\overline M > E(m_0).
    \ee
First we claim that for some $\delta_1>0$
    \be\label{eq:2.26}  \inf_{U\in\whS} L_{m_0+\nu_1}^{\C}(U) \geq E(m_0)+3\delta_1.
    \ee
Indeed, on one hand, if $U\in S^{\C}_{a,\ell_0}$ with $a\in [m_0,m_0+\nu_1]$, we have
    \begin{eqnarray*}
        L_{m_0+\nu_1}^{\C}(U) &=& L_a^{\C}(U) +\half(m_0+\nu_1-a)\norm U_2^2\\
        &\geq& E(a) + \half(m_0+\nu_1-a)\underline M
    \end{eqnarray*}
and thus
    \be\label{eq:2.27}
        \inf_{U\in \bigcup_{a\in [m_0,m_0+\nu_1]} S_{a,\ell_0}^{\C}} L_{m_0+\nu_1}^{\C}(U) > E(m_0).
    \ee
On the other hand, if $U\in S^{\C}_{a,\ell_0}$ with $a\in [m_0+\nu_1,m_0+\nu_0]$,
    \begin{eqnarray*}
        L_{m_0+\nu_1}^{\C}(U) &=& L_a^{\C}(U) +\half(m_0+\nu_1-a)\norm U_2^2\\
        &\geq& E(a) -\half(\nu_0-\nu_1)\overline M\\
        &\geq& E(m_0+\nu_1)-\half(\nu_0-\nu_1)\overline M
    \end{eqnarray*}
and using \eqref{eq:2.25}, it follows that
    \be \label{eq:2.28}
        \inf_{U \in\bigcup_{a\in [m_0+\nu_1,m_0+\nu_0]} S^{\C}_{a,\ell_0}} L_{m_0+\nu_1}^{\C}(U) > E(m_0).
    \ee
Choosing $\delta_1 >0$ small enough,
\eqref{eq:2.26} follows from \eqref{eq:2.27} and \eqref{eq:2.28}.
\vspace{2mm}

Now observe that, since elements in $\whS$ have uniform exponential decays,
    $$  \abs{\Ie(e^{i A(\e p)(x- p)}U(x-p))-L_{V(\epsilon p)}^{\C}(U)}\to 0 \quad
        \hbox{as}\ \epsilon\to 0
    $$
uniformly in $U\in\whS$, $\epsilon p\in \overline{\Omega}$.  Thus, by \eqref{eq:2.26}, for $U\in\whS$, 
$\epsilon p\in \Omega([\nu_1,\nu_0])$
    \begin{eqnarray}\label{lowerb}
        \Ie(e^{i A(\e p)(x- p)}U(x-p)) &=& L_{V(\epsilon p)}^{\C}(U) + o(1)
                \geq L_{m_0+\nu_1}^{\C}(U) + o(1) \nonumber\\
        &\geq& E(m_0) +2\delta_1 \qquad \hbox{for $\epsilon>0$ small.}
    \end{eqnarray}
If we suppose that $u(x)=e^{i A(\e p)(x- p)}U(x-p) +\varphi(x)\in\calS_\e(r_0)$ satisfies
$\epsilon\Upsilon_\e(u)\in \Omega([\nu_1,\nu_0])$,  then by Lemma \ref{claim:2.6}, $\epsilon p$
belongs to  a $2\epsilon R_0$-neighborhood of
$\Omega([\nu_1,\nu_0])$. Thus by (\ref{lowerb}) it follows that
    $$  \Ie(e^{i A(\e p)(x- p)}U(x-p)) \geq E(m_0)+{3\over 2}\delta_1 \quad \hbox{for $\epsilon >0$ small.}
    $$
Finally we observe that $I'_\epsilon$ is bounded on bounded sets uniformly in $\epsilon \in (0,1]$ and 
that by the
compactness of $\widehat S,$ $\{ e^{i A(\e p)(x- p)} U(x-p) ;\, U\in\whS,\ \e p \in \overline{\Omega} \}$ is  bounded in $H_\e$. 
Thus choosing
 $r_1\in (0,r_0)$ small, if $u(x) = e^{i A(\e p)(x- p)} U(x-p) + \varphi (x) \in\calS_\e(r_1)$, we have
    $$  \Ie(e^{i A(\e p)(x- p)}U(x-p))+\varphi(x)) \geq \Ie(e^{i A(\e p)(x- p)}U(x-p)) -\half\delta_1 
        \geq E(m_0)+\delta_1.
    $$
Thus, the conclusion of lemma holds. 
\end{proof}

\medskip

\setcounter{equation}{0}
\section{\label{section:3} A penalization on the modulus}
For technical reasons, we introduce a penalized functional $\Je$ following
\cite{BJ}.
Without restriction we can  assume that $\partial\Omega$ is smooth and for $h>0$ we set
    $$  \Omega_h=\{ x\in\R^N\setminus\Omega ; \, \dist(x,\partial\Omega)<h\}\cup\Omega.
    $$
We choose a small $h_0>0$ such that
    $$  V(x)>m_0 \quad \hbox{for all}\ x\in\overline{\Omega_{2h_0}\setminus\Omega}.
    $$
Let
    $$  \Qe(u)=\left(\epsilon^{-2}\norm u_{L^2(\R^N\setminus(\Omega_{2h_0}/\epsilon))}^2
            -1\right)_+^{p+1\over 2}
    $$
and
    $$  \Je(u) = \Ie(u) +\Qe(u).
    $$

Observe that $J_\e$ is $\mathbb{S}^1$-invariant and we say that
two critical points of $J_{\varepsilon}$\ are geometrically distinct if their
$\mathbb{S}^{1}$-orbits are different. In Proposition \ref{claim:3.2} we prove that a $\mathbb{S}^1$ 
critical orbit of $\Je$ is also a $\mathbb{S}^1$ critical orbit of  $\Ie$ for $\e$ small enough. 
Note that the penalization term $\Qe$ forces the concentration of the modula to occur on $\Omega$.
A motivation to introduce $\Je$ is that it  satisfies a useful estimate from below  given in 
Lemma \ref{claim:3.4}.

\medskip

Now we define
    $$  \wrho_\e(u)=\inf\{ \norm{u- e^{i A(\e p) (x-p)}U(x-p)}_{\e};\, p\in\R^N, 
        \e p \in \overline\Omega, \ U\in\whS\}:\, 
        \calS_\e(r_0)\to\R.
    $$
In the following proposition we derive a crucial uniform estimate of $\norm{\Je'}_{(H_{\e})^*}$ in an
annular neighborhood of a set of expected solutions.

\medskip

\begin{proposition} \label{claim:3.1}
There exists $r_2\in (0,r_1)$ with the following property:
for any 
$0<\rho_1<\rho_0 \leq r_2$,
there exists $\delta_2=\delta_2(\rho_0,\rho_1)>0$ such
that for $\epsilon>0$ small
    $$  \norm{\Je'(u)}_{(H_\e)^*} \geq \delta_2
    $$
for all $u\in\calS_\e(r_2)$ with $\Je(u)\leq E(m_0+\nu_1)$ and $(\wrho_\e(u),\epsilon\Upsilon_\e(u))
\in ([0,\rho_0]\times\Omega([0,\nu_0]))\setminus([0,\rho_1]\times\Omega([0,\nu_1]))$.
\end{proposition}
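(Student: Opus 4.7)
The plan is to argue by contradiction. Fix $0<\rho_1<\rho_0\leq r_2$ (with $r_2\in(0,r_1)$ small, to be chosen), and suppose there exist sequences $\e_n\to 0$ and $u_n\in\calS_{\e_n}(r_2)$ with $J_{\e_n}(u_n)\leq E(m_0+\nu_1)$, $\|J_{\e_n}'(u_n)\|_{(H_{\e_n})^*}\to 0$, satisfying the annular constraint. Decompose $u_n(x)=e^{iA(\e_n p_n)(x-p_n)}U_n(x-p_n)+\varphi_n(x)$ with $\e_n p_n\in\overline\Omega$, $U_n\in\whS$, $\|\varphi_n\|_{\e_n}\leq r_2$. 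Using compactness of $\whS$ in $H^1(\R^N,\C)$ (see \eqref{eq:2.15}) and of $\overline\Omega$, along a subsequence $U_n\to U_0\in\whS$ in $H^1(\R^N,\C)$ and $\e_n p_n\to p_0$; the barycenter hypothesis combined with Lemma~\ref{claim:2.6} yields $p_0\in\overline{\Omega([0,\nu_0])}$, so $V(p_0)\in[m_0,m_0+\nu_0]$.

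Next I would translate and gauge-transform: set $w_n(x)=e^{-iA(\e_n p_n)x}u_n(x+p_n)=U_n(x)+\psi_n(x)$ with $\psi_n(x)=e^{-iA(\e_n p_n)x}\varphi_n(x+p_n)$. As in the proof of Lemma~\ref{claim:2.4},
    $$
    \intRN\Big|\Big(\frac{1}{i}\nabla+A(\e_n p_n)-A(\e_n x+\e_n p_n)\Big)\psi_n\Big|^2+V(\e_n x+\e_n p_n)|\psi_n|^2\,dx=\|\varphi_n\|_{\e_n}^2\leq r_2^2,
    $$
and since $A(\e_n\cdot+\e_n p_n)\to A(p_0)$, $V(\e_n\cdot+\e_n p_n)\to V(p_0)$ locally uniformly, $\psi_n$ is bounded in $H^1(\R^N,\C)$; extract $\psi_n\wlimit\psi_0$. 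Testing $J_{\e_n}'(u_n)$ against gauge-shifted compactly supported test functions and passing to the limit identifies $w_0:=U_0+\psi_0$ as a weak solution of
    $$
    -\Delta w_0+V(p_0)w_0=f(|w_0|^2)w_0 \quad\text{in}\ \R^N.
    $$

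The hard part is upgrading to strong convergence $w_n\to w_0$ in $H^1(\R^N,\C)$, and here two ingredients combine. First, the penalization controls $L^2$-mass outside $\Omega_{2h_0}/\e_n$: $Q_{\e_n}(u_n)\leq J_{\e_n}(u_n)-I_{\e_n}(u_n)$ remains bounded, so after translating by $p_n$ no fixed amount of mass of $w_n$ can persist in regions where the scaled electric potential would undercut $m_0$. Second, the energy ceiling $E(m_0+\nu_1)<2E(m_0)$ forbids splitting: writing $r_n:=w_n-w_0$, a Brezis--Lieb type decomposition applied to $L^{\C}_{V(p_0)}$ and its derivative gives $L^{\C}_{V(p_0)}(w_n)=L^{\C}_{V(p_0)}(w_0)+L^{\C}_{V(p_0)}(r_n)+o(1)$ and $(L^{\C}_{V(p_0)})'(r_n)\to 0$; any non-vanishing $r_n$ would then produce, after a translation, a second non-trivial critical point of a limiting problem whose potential is at least $m_0$, contributing an extra $\geq E(m_0)-o(1)$ and forcing $J_{\e_n}(u_n)\geq 2E(m_0)-o(1)$, a contradiction. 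Vanishing of $r_n$ is excluded by the standard subcritical estimate based on (f1)--(f3). Hence $r_n\to 0$ strongly and $\psi_n\to\psi_0$ in $H^1(\R^N,\C)$.

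Strong convergence then yields $L^{\C}_{V(p_0)}(w_0)\leq E(m_0+\nu_1)<\ell_0$, and $w_0$ is a non-trivial critical point (since $r_2$ is small, $w_0$ is $H^1$-close to $U_0\in\whS$), so $E(V(p_0))\leq E(m_0+\nu_1)$, i.e.\ $V(p_0)\leq m_0+\nu_1$ and $p_0\in\Omega([0,\nu_1])$. Letting $x_0$ be a maximum point of $|w_0|$, $\mathbb{S}^1$-invariance of $\whS$ together with the estimate $A(\e_n p_n)-A(\e_n(p_n+x_0))=O(\e_n)$ allow one to rewrite $u_n=e^{iA(\e_n\tilde p_n)(x-\tilde p_n)}\tilde U(x-\tilde p_n)+o(1)$ in $H_{\e_n}$ for some $\tilde U\in\whS$ with $\e_n\tilde p_n\to p_0\in\Omega$. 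Therefore $\wrho_{\e_n}(u_n)\to 0$, while shift-equivariance and continuity of $\Upsilon_{\e_n}$ (Lemma~\ref{claim:2.6}) give $\e_n\Upsilon_{\e_n}(u_n)\to p_0$. Hence $(\wrho_{\e_n}(u_n),\e_n\Upsilon_{\e_n}(u_n))\in[0,\rho_1]\times\Omega([0,\nu_1])$ eventually, contradicting the annular hypothesis. Choosing $r_2$ small at the outset ensures that the decomposition and the non-vanishing argument apply uniformly.
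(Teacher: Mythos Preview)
Your overall contradiction strategy matches the paper's, but there is a genuine gap at the point where you assert that $\psi_n$ is bounded in $H^1(\R^N,\C)$. From
\[
\intRN\Big|\Big(\tfrac{1}{i}\nabla - (A(\e_n x+\e_n p_n)-A(\e_n p_n))\Big)\psi_n\Big|^2 + V(\e_n x+\e_n p_n)|\psi_n|^2\,dx=\|\varphi_n\|_{\e_n}^2
\]
you only get boundedness in a \emph{magnetic} Sobolev norm; the potential $B_n(x)=A(\e_n x+\e_n p_n)-A(\e_n p_n)$ tends to $0$ only locally uniformly and is unbounded at infinity under (A1), so this does not yield global control of $\|\nabla\psi_n\|_2$. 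The paper stresses exactly this point in Section~\ref{section:2}: $H_\e$ and $H^1(\R^N,\C)$ are not comparable. Consequently your Brezis--Lieb splitting for $L^{\C}_{V(p_0)}$ and the ensuing concentration--compactness argument are not set up in a space where they are justified. The same issue recurs at the end: to conclude $\wrho_{\e_n}(u_n)\to 0$ you must produce a remainder small in the $H_{\e_n}$-norm, and strong $H^1$-convergence (even if you had it) does not give this without further control of $A_{\e_n}$ on the support.

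The paper closes this gap by a localization step you are missing. It first fixes $r_2$ so that $I_\e'(u)u\geq c\|u\|_\e^2$ whenever $\|u\|_\e\leq 2r_2$, then uses a pigeonhole argument to find an annulus $\Omega_{\e,i_\e+1}\setminus\Omega_{\e,i_\e}$ where $\|u_\e\|_{H_\e}$ is $O(\e^{1/2})$, and cuts $u_\e=u_\e^{(1)}+u_\e^{(2)}$ with $\supp u_\e^{(1)}\subset \Omega_{h_0}/\e$. Testing $J_\e'$ against $u_\e^{(2)}$ and using the coercivity estimate plus $Q_\e'(u)u\geq (p+1)Q_\e(u)\geq 0$ gives $\|u_\e^{(2)}\|_\e\to 0$ and $Q_\e(u_\e^{(2)})\to 0$ directly, with no splitting alternative to rule out. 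Since $A_\e$ is uniformly bounded on $\supp u_\e^{(1)}$, the $H_\e$- and $H^1$-norms are now equivalent there, and the paper obtains strong convergence of the recentred $u_\e^{(1)}$ simply by testing $I_\e'(u_\e^{(1)})$ against the difference $w_\e$ and using its weak $H^1$-convergence to $0$ together with the exponential decay of the limit $U$; no Brezis--Lieb decomposition or second-bump energy argument is needed. The energy ceiling $J_\e\leq E(m_0+\nu_1)$ enters only at the very end, to force $V(q_0)\leq m_0+\nu_1$.
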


\medskip

\begin{proof}
By (f1)--(f3), for any $a>0$ there exists $C_a>0$ such that
    \begin{equation}\label{eq:3.1}  
    \abs{f(\xi^2)}\leq a +C_a\abs\xi^{p-1} \quad \hbox{for all}\ \xi\in\R.
    \end{equation}
We fix a $a_0\in (0,\half\underline V)$ and compute
    \begin{eqnarray*}
        \Ie'(u)u
        &=& \int_{\R^N}|(\frac{1}{i}\nabla - A(\e x) ) u|^2  \, dx 
        +\int_{\R^N} V(\epsilon x)|u|^2\, dx -\int_{\R^N}f(|u|^2)|u|^2\, dx \\
        &\geq& \int_{\R^N}|(\frac{1}{i} \nabla - A(\e x) ) u|^2\,dx 
        +\underline V\norm u_2^2 -a_0\norm u_2^2-C_{a_0}\norm u_{p+1}^{p+1}\\
        &\geq& \int_{\R^N}|(\frac{1}{i} \nabla - A(\e x) ) u|^2\,dx 
        +\half \underline V\norm u_2^2 -C_{a_0}\norm u_{p+1}^{p+1}.
    \end{eqnarray*}
Now choosing $r_2>0$ small enough there exists $c>0$ such that
    \be\label{eq:3.2}
        \int_{\R^N}|(\frac{1}{i}\nabla - A (\e x) ) u|^2 \, dx
        +\half \underline V\norm u_2^2 -2^p C_{a_0}\norm u_{p+1}^{p+1}
        \geq c\norm{u}_{\e}^2 \quad \hbox{for all}\quad \norm u_{\e}\leq 2r_2.
    \ee
(For a technical reason, especially to get \eqref{eq:3.23} later, we add \lq\lq $2^p$'' in front of
$C_{a_0}$.)
In particular, we have
    \be\label{eq:3.3}
        \Ie'(u)u \geq c \norm{u}_{\e}^2 \quad \hbox{for all}\quad  \norm{u}_{\e} \leq 2r_2.
    \ee
Now we set
    $$  n_\epsilon =\left[{h_0\over \epsilon}\right]-1
    $$
and for each $i=1,2,\cdots,n_\epsilon$ we fix a function $\varphi_{\epsilon,i}\in C_0^\infty(\Omega)$
such that
    \begin{eqnarray*}
        &&\varphi_{\epsilon,i}(x)=\begin{cases}    1   &\hbox{if $x\in\Omega_{\epsilon,i}$,} \\
                                            0   &\hbox{if $x\not\in\Omega_{\epsilon,i+1}$,} \end{cases}\\
        &&\varphi_{\epsilon,i}(x)\in [0,1], \ \abs{\varphi'_{\epsilon,i}(x)}\leq 2
            \quad \hbox{for all}\ x\in\Omega.
    \end{eqnarray*}
Here we denote for $\epsilon>0$ and $h\in (0,2h_0/\epsilon]$
    \begin{eqnarray*}
        \Omega_{\epsilon,h} &=& (\Omega_{\epsilon h})/\epsilon \\
        &=&\{ x\in \R^N\setminus(\Omega/\epsilon);\,
        \dist(x,(\partial\Omega)/\epsilon)<h\}\cup (\Omega/\epsilon).
    \end{eqnarray*}
Now suppose that a sequence $(\ue)\subset\calS_\e(r_2)$ satisfies
for $0<\rho_1<\rho_0<r_2$
    \begin{eqnarray}
        &&\Je(\ue) \leq E(m_0+\nu_1),                        \label{eq:3.4}\\
        &&\wrho_\e(\ue) \in [0,\rho_0],                         \label{eq:3.5}\\
        &&\epsilon\Upsilon_\e(\ue)\in\Omega([0,\nu_0]),         \label{eq:3.6}\\
        &&\norm{\Je'(\ue)}_{(H_\e)^*} \to 0.                   \label{eq:3.7}
    \end{eqnarray}
We shall prove, in several steps, that for $\epsilon>0$ small
    \be\label{eq:3.8}
        \wrho_\e(\ue)\in [0,\rho_1]\quad \hbox{and}\quad \epsilon\Upsilon_\e(\ue)\in \Omega([0,\nu_1]),
    \ee
from which the conclusion of Proposition \ref{claim:3.1} follows.
\vspace{2mm}

\noindent
{\sl Step 1: There exists a $i_\epsilon\in \{ 1,2,\cdots, n_\epsilon\}$ such that
    \be\label{eq:3.9}
        \norm{\ue}_{H_\e(\Omega_{\epsilon,i_\epsilon+1}\setminus\Omega_{\epsilon,i_\epsilon})}^2
        \leq {4r_2^2\over n_\epsilon}.
    \ee
Here we use notation:
    $$  \| u\|_{H_\e(K)}^2=\int_K \left|\left(\frac{1}{i}\nabla -A_\e\right)u\right|^2+|u|^2\, dx
    $$
for $u\in H_\e$ and $K\subset \R^N$.
}

\noindent
Indeed we can write 
$u_\e = e^{i A(\e p_\e) (x- p_\e)} U_\e(x - p_\e) + \varphi_\e(x)$ with $\| \varphi_\e\|_\e \leq r_2$ and
by \eqref{eq:3.6} and the uniform exponential decay of $\whS$, we have
    $$
    \norm{\ue}_{H_\e(\R^N\setminus(\Omega/\epsilon))} \leq
    \norm{e^{i A(\e p_\e) (x- p_\e)} U_\e(x - p_\e)}_{H_\e(\R^N\setminus(\Omega/\epsilon))} 
    + \norm{\varphi_\e}_{H_\e(\R^N\setminus(\Omega/\epsilon))} \leq 2r_2
    $$
for $\epsilon >0$ small.  Thus
    $$  \sum_{i=1}^{n_\epsilon}\norm{\ue}_{H_\e(\Omega_{\epsilon,i+1}\setminus\Omega_{\epsilon,i})}^2
        \leq \norm{\ue}_{H_\e(\Omega_{\epsilon,h_0/\epsilon}\setminus(\Omega/\epsilon))}^2
        \leq 4r_2^2
    $$
and there exists $i_\epsilon\in \{ 1,2,\cdots, n_\epsilon\}$ satisfying \eqref{eq:3.9}.

\medskip

\noindent
{\sl Step 2: For the $i_\epsilon$ obtained in Step 1, we set
$$
        \ue^{(1)}(x) = \varphi_{\epsilon,i_\epsilon}(x)\ue(x) \quad \mbox{and} \quad
        \ue^{(2)}(x) = \ue(x)-\ue^{(1)}(x).  $$
Then we have, as $\epsilon\to 0$,
    \begin{eqnarray}
        &&\Ie(\ue^{(1)})=\Je(\ue)+o(1),                  \label{eq:3.10}\\
        &&\|\Ie'(\ue^{(1)})\|_{(H_\e)^*}\to 0,      \label{eq:3.11}\\
        &&\|\ue^{(2)}\|_\e \to 0,     \label{eq:3.12}\\
        &&\Qe(\ue^{(2)})\to 0.                           \label{eq:3.13}
    \end{eqnarray}}

\smallskip

\noindent
Observe that
 \be\label{eq:3.14}
        \Ie(\ue) = \Ie(\ue^{(1)})+\Ie(\ue^{(2)}) +o(1).
    \ee
Indeed, by \eqref{eq:3.9}
    \begin{eqnarray*}
    &&\Ie(\ue)-(\Ie(\ue^{(1)})+\Ie(\ue^{(2)}))  \\
    &=&  Re \int_{\Omega_{\epsilon,i_\epsilon+1}\setminus\Omega_{\epsilon,i_\epsilon}}
        (\frac{1}{i}\nabla -A(\e x))(\varphi_{\epsilon,i_\epsilon}\ue) 
            \overline {\left( \frac{1}{i}\nabla - A(\e x)\right)((1-\varphi_{\epsilon,i_\epsilon})\ue)} \\
        &&
        +V(\epsilon x) \varphi_{\epsilon,i_\epsilon}(1-\varphi_{\epsilon,i_\epsilon})|\ue|^2\, dx\\
       & &- \half\int_{\Omega_{\epsilon,i_\epsilon+1}\setminus\Omega_{\epsilon,i_\epsilon}}
            F(|\ue|^2)-F(|\ue^{(1)}|^2)-F(|\ue^{(2)}|^2)\, dx \\
    &\to& 0 \qquad \hbox{as}\ \epsilon\to 0.
    \end{eqnarray*}

\noindent
Thus
    \be\label{eq:3.15}
        \Je(\ue) = \Ie(\ue^{(1)})+\Ie(\ue^{(2)}) +\Qe(\ue^{(2)})+o(1).
    \ee
We can also see that
    \be\label{eq:3.16}
        \norm{\Ie'(\ue)-\Ie'(\ue^{(1)}) -\Ie'(\ue^{(2)})}_{(H_\e)^*} \to 0
        \quad \hbox{as}\ \epsilon\to 0.
    \ee
In a similar way, it follows from \eqref{eq:3.7} that, since $\|\ue^{(2)}\|_\e$ is bounded, that
    \be\label{eq:3.17}
        \Ie'(\ue^{(2)})\ue^{(2)} + \Qe'(\ue^{(2)})\ue^{(2)}
        = \Je'(\ue)\ue^{(2)} +o(1) = o(1).
    \ee
We note that $\norm{\ue^{(2)}}_{\e} \leq 2r_2$ and $(p+1)\Qe(u) \leq \Qe'(u)u$ for all
$u\in H_\e$.  Thus by \eqref{eq:3.3}
    $$  c\norm{\ue^{(2)}}_{\e}^2 + (p+1)\Qe(\ue^{(2)})\to 0 \quad
        \hbox{as}\ \epsilon\to 0,
    $$
which implies \eqref{eq:3.12} and \eqref{eq:3.13}. Now  \eqref{eq:3.12} implies that
$\Ie(\ue^{(2)})\to 0$ and thus \eqref{eq:3.10} follows from  \eqref{eq:3.15}.
\vspace{2mm}

Finally we show \eqref{eq:3.11}.  We choose a function $\widetilde\varphi\in C_0^\infty(\R^N)$
such that
    $$  \widetilde\varphi(x)=\begin{cases}    1   &\hbox{for $x\in \Omega_{h_0}$,}\\
                                        0   &\hbox{for $x\in \R^N\setminus\Omega_{2h_0}$.}
                            \end{cases}
    $$
Then we have, for all $w\in H_\e$,
    \begin{eqnarray*}
    \Ie'(\ue^{(1)})w &=& \Ie'(\ue^{(1)})(\widetilde\varphi(\epsilon x)w) \\
    &=& \Ie'(\ue)(\widetilde\varphi(\epsilon x)w)
        - (\Ie'(\ue)-\Ie'(\ue^{(1)}))(\widetilde\varphi(\epsilon x)w)\\
    &=& \Je'(\ue)(\widetilde\varphi(\epsilon x)w)
        - (\Ie'(\ue)-\Ie'(\ue^{(1)}))(\widetilde\varphi(\epsilon x)w)
    \end{eqnarray*}
and it follows that
    $$  \abs{\Ie'(\ue^{(1)})w}
        \leq \norm{\Je'(\ue)}_{(H_\e)^*}\norm{\widetilde\varphi(\epsilon x)w}_{\e}
        +\norm{\Ie'(\ue)-\Ie'(\ue^{(1)})}_{(H_\e)^*}\norm{\widetilde\varphi(\epsilon x)w}_{\e}.
    $$
We note that by \eqref{eq:3.12} and \eqref{eq:3.16}, $\norm{\Ie'(\ue)-\Ie'(\ue^{(1)})}_{(H_\e)^*}
\to 0$.  Therefore, by \eqref{eq:3.7}, $\norm{\Ie'(\ue^{(1)})}_{(H_\e)^*} \to 0$, that is
\eqref{eq:3.11} holds true.

\medskip

\noindent
{\sl Step 3: After extracting a subsequence --- still we denoted by $\epsilon$ ---,
there exist a sequence $(\tilde p_\e)\subset\R^N$ and $\tilde U\in\whS$ such that
    \begin{eqnarray}
    &&\epsilon \tilde p_\e\to \tilde p_0\quad \hbox{for some}\ \tilde p_0\in \Omega([0,\nu_1]),
                                                                        \label{eq:3.18}\\
    &&\norm{\ue^{(1)}- e^{i A(\e \tilde p_\e) (x-\tilde p_\e)} \tilde U(x-\tilde p_\e)}_{\e}\to 0,                       \label{eq:3.19}\\
    &&\Ie(\ue^{(1)})\to L_{V(\tilde p_0)}(\tilde U) \quad \hbox{as}\ \epsilon\to 0.    \label{eq:3.20}
    \end{eqnarray}
}
\noindent
Let $q_\epsilon=\Upsilon_\e(\ue)$.  We may assume that
    \begin{equation}\label{eq:3.21}
    e^{-i A(\e q_\e) (x + q_\e)} \ue^{(1)}(x+q_\epsilon)\wlimit U(x) \quad \hbox{weakly in}\ H^1(\R^N)
    \end{equation}
for some $U \in H^1(\R^N) \setminus\{ 0\}$ and also that $\epsilon q_\epsilon\to q_0\in\overline\Omega$.
In fact, noting $A_\e(x)$ is uniformly bounded on $\supp u^{(1)}_\e\subset\Omega_{\e, n_\e+1}\subset \Omega_{h_0}/\e$,
boundedness of $\|u^{(1)}_\e\|_\e$ implies boundedness of $\|u^{(1)}_\e\|_{H^1}$.  Thus, 
$e^{-i A(\e q_\e) (x + q_\e)} \ue^{(1)}(x+q_\epsilon)$  is also bounded in $H^1(\R^N)$.  
Taking a subsequence if necessary, we have \eqref{eq:3.21}.

From the definition of $\Upsilon_\e$ and \eqref{eq:3.11}, it follows that
    $(L^{\C}_{V(q_0)})'(U)=0$ and $U\not=0$.
In particular, $U(x)$ decays exponentially as $|x|\to\infty$.
Setting
    $$  w_\e (x)=   \ue^{(1)}(x+q_\epsilon)-  e^{i A(\e q_\e) \, x } U(x),
    $$
we have $w_\e $ is bounded in $H^1(\R^N)$ and 
    $   e^{-i A(\e q_\e) (x + q_\e)}w_\e  \wlimit 0 \quad \text{weakly in}\ H^1(\R^N)
    $
and thus $w_\e \wlimit 0$ weakly in $H^1(\R^N)$.  
We shall prove that $\norm{w_\e (x-q_\e)}_{\e}\to 0$.   We have from the exponential decay of
$U(x)$ that
\begin{eqnarray}
    &&\Ie'(\ue^{(1)})w_\e (x-q_\e)\nonumber\\
    &=&\ \Ie'(e^{i A(\e q_\e) (x - q_\e)}  U(x-q_\e)
        +w_\e (x-q_\e))w_\e (x-q_\e) \nonumber\\
    &=&\ {\rm Re}\int_{\R^N} \left(\frac{1}{i} \nabla - A(\e x + \e q_\e)\right) (e^{i A(\e q_\e) \, x } 
    U+w_\e ) 
    \overline{ \left(\frac{1}{i} \nabla - A(\e  x + \e q_\e)\right)w_\e }\, dx \nonumber\\
    &&\ +{\rm Re}\int_{\R^N}
        V(\e x+\e q_\e)(e^{i A(\e q_\e) \, x } U+w_\e )\overline{w_\e }\, dx \nonumber\\
    &&\    -{\rm Re}\int_{\R^N} f(|e^{i A(\e q_\e)x} U+w_\e |^2)
        (e^{i A(\e q_\e)x}U+w_\e )\overline{w_\e }\,dx\nonumber\\
    &=&\ \int_{\R^N} |\left(\frac{1}{i} \nabla - A(\e x + \e q_\e)\right)w_\e |^2 
    +V(\e x+\e q_\e)|w_\e |^2\,dx\nonumber\\
    &&\ +{\rm Re}\int_{\R^N} \left(\frac{1}{i}\nabla-A(\e x+\e q_\e)\right)(e^{iA(\e q_\e)x} U)
        \overline{\left(\frac{1}{i}\nabla-A(\e x+\e q_\e)\right)w_\e }\, dx \nonumber\\
    &&\ +{\rm Re}\int_{\R^N} V(\e x+\e q_\e)
        e^{iA(\e q_\e)x} U\overline{w_\e }\, dx\nonumber\\
    &&\ -{\rm Re}\int_{\R^N} f(|e^{i A(\e q_\e)x}U+w_\e |^2)
        (e^{i A(\e q_\e)x}U+w_\e )\overline{w_\e }\,dx\nonumber\\
    &=&\ \int_{\R^N} |\left(\frac{1}{i} \nabla - A(\e x + \e q_\e)\right)w_\e |^2 
    +V(\e x+\e q_\e)|w_\e |^2\,dx\nonumber\\
    &&\ +{\rm Re}\int_{\R^N} \left(\frac{1}{i}\nabla-A(\e q_\e)\right)(e^{iA(\e q_\e)x} U)
        \overline{\left(\frac{1}{i}\nabla-A(\e q_\e)\right)w_\e }\, dx \nonumber\\
    &&\ +{\rm Re}\int_{\R^N} V(\e q_\e)
        e^{iA(\e q_\e)x} U\overline{w_\e }\, dx +o(1)\nonumber\\
    &&\ -{\rm Re}\int_{\R^N} f(|e^{i A(\e q_\e)x}U+w_\e |^2)
        (e^{i A(\e q_\e)x}U+w_\e )\overline{w_\e }\,dx\nonumber\\
    &=&\ \int_{\R^N} |\left(\frac{1}{i} \nabla - A(\e x + \e q_\e)\right)w_\e |^2 
    +V(\e x+\e q_\e)|w_\e |^2\,dx\nonumber\\
    &&\ + (L^{\C}_{V(\e q_\e)})'(U)(e^{-iA(\e q_\e)x} w_\e ) 
        +{\rm Re}\int_{\R^N} f(|U|^2)U\overline{e^{-i A(\e q_\e)x}w_\e }\,dx\nonumber\\
    &&\ -{\rm Re}\int_{\R^N} f(|e^{i A(\e q_\e)x}U+w_\e |^2)
        (e^{i A(\e q_\e)x}U+w_\e )\overline{w_\e }\,dx+o(1)\nonumber\\
    &=&\ \int_{\R^N} |\left(\frac{1}{i}\nabla - A(\e x + \e q_\e)\right)w_\e |^2 
    +V(\e x+\e q_\e)w_\e ^2\,dx \nonumber\\
    &&\ +(L^{\C}_{V(\e q_\e)})'(U)(e^{-iA(\e q_\e)x} w_\e ) 
        +(I)-(II)+o(1).                              \label{eq:3.22}
    \end{eqnarray}
Since $(L^{\C}_{V(\e q_\e)})'(U)\to (L^{\C}_{V(p_0)})'(U)=0$, we have
    $$  {(L^{\C}_{V(\e q_\e)})}'(U)(e^{-iA(\e q_\e)x}w_\e ) \to 0.
    $$
Now, by \eqref{eq:3.1}, 
    \begin{eqnarray*}
    \abs{(I)}+\abs{(II)} &\leq& \int_{\R^N} (a_0(\abs{U}+\abs{e^{iA(\e q_\e)x}U+w_\e })
        +C_{a_0}(\abs{U}^p+\abs{e^{iA(\e q_\e)x}U+w_\e }^p))
        \abs{w_\e }\, dx\\
    &\leq& \int_{\R^N} a_0\abs{w_\e }^2 + 2^pC_{a_0}\abs{w_\e }^{p+1}
        +(2a_0\abs{U}+(1+2^p)C_{a_0}\abs{U}^p)\abs{w_\e }
        \, dx\\
    &\leq& \int_{\R^N} a_0\abs{w_\e }^2 + 2^pC_{a_0}\abs{w_\e }^{p+1}
        \, dx + o(1).
    \end{eqnarray*}
Here we used the fact that $w_\e \wlimit 0$ weakly in $H^1(\R^N)$.
Thus, by \eqref{eq:3.22} and \eqref{eq:3.11}, we have
    $$  \int_{\R^N} |\left(\frac{1}{i} \nabla - A(\e x + \e q_\e )\right)w_\e |^2 +\underline V\norm{w_\e }_2^2
        \leq a_0 \|w_\e (x-q_\e) \|_\e^2 + 2^p C_{a_0} \norm{w_\e }_{p+1}^{p+1}
        +o(1)
    $$
from which we deduce, using \eqref{eq:3.2}, that
\be\label{eq:3.23}
\norm{w_\e (x-q_\e)}_{\e}\to 0.
\ee
At this point we have obtained  \eqref{eq:3.19}, \eqref{eq:3.20} where $\tilde p_\e$, $\tilde p_0$ and 
$\tilde U$ are replaced  with $q_\epsilon$, $q_0$ and $U$.
Since
    \be\label{eq:3.24}
        \Ie(\ue) = \Ie(\ue^{(1)}) +o(1) = \Je(\ue) + o(1) \leq E(m_0+\nu_1)+o(1)
    \ee
implies
    $$  E(V(q_0)) \leq L_{V(q_0)}(U) \leq E(m_0+\nu_1),
    $$
we have $\tilde p_0=q_0\in \Omega([0,\nu_1])$ and $U$ belongs to $S_{V(\tilde p_0)}\subset\whS$ after
a suitable shift, that is, $U(x):=\widetilde U(x+y_0)\in \whS$ for some $y_0\in\R^N$.
Setting $\tilde p_\e=q_\epsilon+y_0$, we get \eqref{eq:3.18}--\eqref{eq:3.20}.

\medskip

\noindent
{\sl Step 4: Conclusion}

\smallskip

\noindent
In Steps 1--3, we have shown that a sequence $(u_n) \subset \calS_\e(r_2)$ with \eqref{eq:3.4}--\eqref{eq:3.7} 
satisfies, up to a subsequence, and for some $U\in\whS$  \eqref{eq:3.19}--\eqref{eq:3.20} with 
$\tilde p_{\e} = \Upsilon_\e(\ue) + y_0$. This implies that
    \begin{eqnarray*}
        &\epsilon\Upsilon_\e(\ue)\to \tilde p_0\in \Omega([0,\nu_1]),\\
        &\norm{\ue(x)-e^{i A(\e \tilde p_\e) \, (x- \tilde p_\e) }U(x-\tilde p_\e)}_{\e} \to 0.
    \end{eqnarray*}
In particular since $\wrho_\e(\ue)\to 0$ as $\e\to 0$, we have $\wrho_\e(\ue) \in [0, \rho_1] $ 
and \eqref{eq:3.8} holds.
This ends the proof of the Proposition.  
\end{proof}

\medskip

\begin{proposition}\label{claim:3.2}
There exists $\epsilon_0>0$ such that for $\epsilon\in (0,\epsilon_0]$ if $\ue\in\calS_\e(r_2)$
satisfies
    \begin{eqnarray}
        &&\Je'(\ue)=0,                       \label{eq:3.25}\\
        &&\Je(\ue)\leq E(m_0+\nu_1),         \label{eq:3.26}\\
        &&\epsilon\Upsilon_\e(\ue)\in\Omega([0,\nu_0]), \label{eq:3.27}
    \end{eqnarray}
then
    \be\label{eq:3.28}
        \Qe(\ue)=0 \quad \hbox{and}\quad \Ie'(\ue)=0.
    \ee
That is, $\ue$ is a solution of \eqref{eq:2.1}.
\end{proposition}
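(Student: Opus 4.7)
The plan is to argue by contradiction via Proposition \ref{claim:3.1} combined with a comparison argument for $|u_\e|$. Suppose the conclusion fails: there exist $\e_n\to 0$ and $u_n:=u_{\e_n}\in\calS_{\e_n}(r_2)$ satisfying \eqref{eq:3.25}--\eqref{eq:3.27} with
$$
\|u_n\|^2_{L^2(\R^N\setminus(\Omega_{2h_0}/\e_n))}>\e_n^2.
$$
Applying Proposition \ref{claim:3.1} with $\rho_0=r_2$ fixed and a sequence $\rho_1\downarrow 0$, the identity $\Je'(u_n)=0$ combined with the hypothesis $\e_n\Upsilon_{\e_n}(u_n)\in\Omega([0,\nu_0])$ forces, after a diagonal argument, $\wrho_{\e_n}(u_n)\to 0$ and $\e_n\Upsilon_{\e_n}(u_n)\in\Omega([0,\nu_1])$ for large $n$. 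Up to a subsequence we thus find $\tilde p_n\in\R^N$ and $\tilde U_n\in\whS$ with $\e_n\tilde p_n\to\tilde p_0\in\overline{\Omega([0,\nu_1])}\subset\Omega$, $\tilde U_n\to\tilde U_0\in\whS$ strongly in $H^1(\R^N,\C)$, and
$$
\|u_n-W_n\|_{\e_n}\to 0,\qquad W_n(x)=e^{iA(\e_n\tilde p_n)(x-\tilde p_n)}\tilde U_n(x-\tilde p_n).
$$

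I would then exploit the subsolution structure of $|u_n|$. Writing out $\Je'(u_n)=0$ and observing that the contribution of $\Qe'$ enters with a non-negative multiplier, Kato's inequality yields
$$
-\Delta|u_n|+V(\e_n x)|u_n|\leq f(|u_n|^2)|u_n|\quad\text{in }\R^N.
$$
A standard Moser iteration gives a uniform $L^\infty$-bound for $|u_n|$; combining De\,Giorgi-Nash regularity with the $H^1$-convergence from Lemma \ref{claim:2.4}, one upgrades this to $|u_n(\cdot+\tilde p_n)|\to|\tilde U_0|$ locally uniformly. Inserting the uniform exponential decay \eqref{expdecay} of $\tilde U_0$ into the pointwise estimate for subsolutions on unit balls centered far from $\tilde p_n$ produces
$$
\sup_{|x-\tilde p_n|\geq R}|u_n(x)|\to 0\quad\text{as }R\to\infty,\ \text{uniformly in }n.
$$
In particular we can fix $R>0$ independent of $n$ so that $f(|u_n(x)|^2)\leq\underline V/2$ on $\{|x-\tilde p_n|\geq R\}$ for $n$ large.

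Since $\e_n\tilde p_n\in\Omega$ and $\R^N\setminus(\Omega_{h_0}/\e_n)$ lies at distance $\geq h_0/\e_n$ from $\tilde p_n$, for $n$ large this outer set is contained in $\{|x-\tilde p_n|\geq R\}$, where
$$
-\Delta|u_n|+\tfrac{\underline V}{2}|u_n|\leq 0.
$$
Comparing with the radial exponential barrier $C\exp(-c(|x-\tilde p_n|-R))$ for some $c\in(0,\sqrt{\underline V/2})$ yields $|u_n(x)|\leq C'\exp(-c|x-\tilde p_n|)$ on $\R^N\setminus(\Omega_{h_0}/\e_n)$. Since $|x-\tilde p_n|\geq 2h_0/\e_n$ on $\R^N\setminus(\Omega_{2h_0}/\e_n)$, integration gives
$$
\|u_n\|^2_{L^2(\R^N\setminus(\Omega_{2h_0}/\e_n))}\leq C''\exp(-ch_0/\e_n)\ll\e_n^2
$$
for large $n$, a contradiction. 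Thus $\Qe(u_\e)=0$ for all small $\e$; because $\Qe$ vanishes identically on an $L^2$-neighborhood of such $u_\e$, we conclude $\Qe'(u_\e)=0$, and therefore $\Ie'(u_\e)=\Je'(u_\e)=0$, proving \eqref{eq:3.28}.

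The main obstacle in this scheme is the uniform pointwise smallness of $|u_n|$ at distances of order $1/\e_n$ from $\tilde p_n$: the $H_{\e_n}$-convergence obtained from Proposition \ref{claim:3.1} only controls $u_n-W_n$ globally in $L^2$, and upgrading this into an $L^\infty$ estimate uniform in $n$ calls for a careful combination of Moser iteration with the subsolution inequality and the uniform exponential decay \eqref{expdecay} of the limiting profiles.
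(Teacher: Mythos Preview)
Your argument is correct and follows essentially the same route as the paper's proof: both use Proposition~\ref{claim:3.1} to obtain concentration of $u_\e$ near a profile in $\whS$, then Kato's inequality and Moser iteration to get $L^\infty$-smallness of $|u_\e|$ far from the concentration point, then a comparison with an exponential barrier to conclude $\|u_\e\|_{L^2(\R^N\setminus(\Omega_{2h_0}/\e))}<\e$ and hence $\Qe(u_\e)=0$. The only organizational difference is that the paper invokes the decomposition $u_\e=u_\e^{(1)}+u_\e^{(2)}$ and \eqref{eq:3.12} from the \emph{proof} of Proposition~\ref{claim:3.1} to get $\|u_\e\|_{H_\e(\R^N\setminus(\Omega_{h_0}/\e))}\to 0$ directly, whereas you use only the \emph{statement} of that proposition to force $\wrho_{\e_n}(u_n)\to 0$ and then rebuild the concentration via Lemma~\ref{claim:2.4}; this makes your version slightly longer but self-contained.
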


\medskip

\begin{proof}
Suppose that $\ue$ satisfies \eqref{eq:3.25}--\eqref{eq:3.27}.
Since $\ue$ satisfies \eqref{eq:3.25}
we have    \begin{eqnarray}
        &&\bigg( \frac{1}{i} \nabla - A_\e \bigg)^2  u_\e + \Bigl(V_\epsilon+(p+1)
        (\epsilon^{-2}\norm{\ue}_{L^2(\R^N\setminus(\Omega_{2h_0}/\epsilon))}^2-1)_+^{p-1\over 2} \nonumber\\
        &&\qquad \qquad\ \times \epsilon^{-2} \chi_{\R^N\setminus(\Omega_{2h_0}/\epsilon)}(x)\Bigr) \ue
         = f(|\ue|^2) \ue,                                  \label{eq:3.29}
    \end{eqnarray}
where $\chi_{\R^N\setminus(\Omega_{2h_0}/\epsilon))}(x)$ is the characteristic function of the set
${\R^N\setminus(\Omega_{2h_0}/\epsilon)}$.

Clearly $\ue$ satisfies \eqref{eq:3.4}--\eqref{eq:3.7} and thus, by the proof of Proposition \ref{claim:3.1}, 
we have
    $$  \norm{\ue}_{H_\e(\R^N\setminus(\Omega_{h_0}/\epsilon))}
        \leq \norm{\ue^{(2)}}_{\e}\to 0  \quad \hbox{as}\ \epsilon\to 0.
    $$
From Moser's iteration scheme, it follows that
    $$  \norm{\ue}_{L^\infty(\R^N\setminus(\Omega_{{3\over 2}h_0}/\epsilon))}\to 0
        \quad \hbox{as}\ \epsilon\to 0
    $$
and using a comparison principle, we deduce that for some $c$, $c'>0$
    $$  \abs{\ue(x)} \leq c'\exp(-c\dist(x,\Omega_{{3\over 2}h_0}/\epsilon)).
    $$
In particular then
    $$  \norm{\ue}_{L^2(\R^N\setminus(\Omega_{2h_0}/\epsilon))}<\epsilon \quad
        \hbox{for} \quad  \epsilon >0 \ \hbox{small}\
    $$
and we have \eqref{eq:3.28}. 
\end{proof}

\vspace{2mm}

To find critical points of $\Je$, we need the following.

\medskip

\begin{proposition} \label{claim:3.3}
For any fixed $\epsilon>0$, the Palais-Smale condition holds for $\Je$ in $\{ u\in\calS_\e(r_2);\,
\epsilon\Upsilon_\e(u)\in\Omega([0,\nu_0])\}$.  That is, if a sequence $(u_j)\subset H_\e$
satisfies for some $c>0$
    \begin{eqnarray*}
        &&u_j \in\calS_\e(r_2),\\
        &&\epsilon\Upsilon_\e(u_j) \in \Omega([0,\nu_0]),\\
        &&\norm{\Je'(u_j)}_{(H_\e)^*} \to 0,\\
        &&\Je(u_j)\to c \quad \hbox{as}\ j\to \infty,
    \end{eqnarray*}
then $(u_j)$ has a strongly convergent subsequence in $H_\e$.
\end{proposition}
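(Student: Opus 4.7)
I would follow a standard Palais--Smale argument, exploiting that $\e>0$ is fixed, the coercivity estimate \eqref{eq:3.2}, and the penalization $\Qe$. No concentration-compactness at infinity is needed: the smallness $r_2$ of the perturbation in $\calS_\e(r_2)$ is precisely chosen for \eqref{eq:3.2} to apply to $u_j-u$, thereby absorbing the nonlinear contribution.

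First I would extract a weak limit. Each $u_j\in\calS_\e(r_2)$ writes as $u_j=e^{iA(\e p_j)(x-p_j)}U_j(x-p_j)+\varphi_j$ with $\e p_j\in\overline\Omega$, $U_j\in\whS$, $\|\varphi_j\|_\e<r_2$. Since $\e$ is fixed, $(p_j)$ lies in the bounded set $\overline\Omega/\e$, and $\whS$ is compact in $H^1(\R^N,\C)$; along a subsequence $p_j\to p_0$, $U_j\to U_0$ strongly in $H^1$, and $\varphi_j\wlimit\varphi_0$ weakly in $H_\e$. The uniform exponential decay \eqref{expdecay} and continuity of $A$ make the main part converge strongly in $H_\e$, so $u_j\wlimit u:=e^{iA(\e p_0)(x-p_0)}U_0(x-p_0)+\varphi_0$ in $H_\e$. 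Standard weak continuity (subcriticality of $f$, Rellich on bounded sets, and the manifest weak continuity of $\Qe'$ for fixed $\e$) then gives $\Je'(u)=0$.

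Next, I set $w_j:=u_j-u\wlimit 0$ and subtract $\Je'(u_j)(w_j)-\Je'(u)(w_j)=o(1)$, obtaining
\[
\|w_j\|_{\e,V,A}^2 + [\Qe'(u_j)-\Qe'(u)](w_j) = \int_{\R^N}\bigl[f(|u_j|^2)u_j-f(|u|^2)u\bigr]\overline{w_j}\,dx + o(1).
\]
A short Brezis--Lieb type computation on the $L^2$-norm over $\R^N\setminus\Omega_{2h_0}/\e$ shows the penalization contribution is non-negative modulo $o(1)$, and the classical Brezis--Lieb decomposition applied to the nonlinearity---valid since $u_j\to u$ a.e.\ by \eqref{eq:2.5} and the sequence is $L^{p+1}$-bounded---reduces the right-hand side to $\int_{\R^N} f(|w_j|^2)|w_j|^2\,dx+o(1)$.

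Finally, since $\|\varphi_j\|_\e<r_2$ and $\|\varphi_0\|_\e\leq r_2$ by lower semicontinuity, one has $\|w_j\|_\e\leq 2r_2+o(1)$, so \eqref{eq:3.2} applies to $w_j$. Combined with the subcritical bound \eqref{eq:3.1} and the choice $a_0\in(0,\tfrac12\underline V)$ made exactly as in the proof of Proposition~\ref{claim:3.1}, one obtains
\[
c\|w_j\|_\e^2 + (2^p-1)C_{a_0}\|w_j\|_{p+1}^{p+1} + (\tfrac12\underline V - a_0)\|w_j\|_2^2 \leq o(1),
\]
whence $\|w_j\|_\e\to 0$. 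The main obstacle is the Brezis--Lieb step for the complex-valued magnetic nonlinearity and the positivity of the penalization contribution; both follow from standard arguments once a.e.\ convergence and uniform $L^{p+1}$-boundedness are in hand. The factor $2^p$ that was deliberately inserted in \eqref{eq:3.2} is exactly what allows absorption of the nonlinear $L^{p+1}$ term into the left-hand side here.
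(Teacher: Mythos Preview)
Your argument is correct but takes a different route from the paper. The paper never identifies the weak limit explicitly nor invokes Brezis--Lieb; it proves \emph{tightness} directly. After extracting $u_j\rightharpoonup u_0$, it fixes $L$ large with $\|u_j\|_{H_\e(\R^N\setminus B_L)}<2r_2$, slices $B_{L+n}\setminus B_L$ into $n$ unit annuli, picks one $D_{i_j}$ with $\|u_j\|_{H_\e(D_{i_j})}^2<4r_2^2/n$, and tests $\Je'(u_j)$ against the tail $\widetilde u_j=(1-\zeta_{i_j}(|x|))u_j$. Since $\Qe'(u_j)\widetilde u_j\geq 0$ and the cutoff error is $O(1/n)$, one gets $\Ie'(\widetilde u_j)\widetilde u_j\leq C/n+o(1)$, and then \eqref{eq:3.3} yields $\|u_j\|_{H_\e(|x|>L+n)}^2\leq C/n+o(1)$, hence tightness. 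Your approach instead mirrors Step~3 of Proposition~\ref{claim:3.1}: you exploit the structure of $\calS_\e(r_2)$ (compactness of $\whS$, boundedness of $p_j$ for fixed $\e$) to get $\|w_j\|_\e\leq 2r_2+o(1)$ and then feed $w_j$ into \eqref{eq:3.2}. This is arguably more direct, at the cost of relying on the specific form of $\calS_\e$ where the paper's tightness argument does not.

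Two minor imprecisions, neither fatal. First, $\Qe'$ is \emph{not} weakly continuous: the map $v\mapsto\|v\|_{L^2(\R^N\setminus\Omega_{2h_0}/\e)}^2$ is only weakly lower semicontinuous, so your claim $\Je'(u)=0$ is unjustified. Fortunately you only use $\Je'(u)(w_j)=o(1)$, which does follow from $w_j\rightharpoonup 0$. Second, the Brezis--Lieb splitting for a merely continuous $f$ requires some care; it is simpler---and sufficient, precisely because of the $2^p$ in \eqref{eq:3.2}---to bound $\re\int f(|u_j|^2)u_j\overline{w_j}\leq a_0\|w_j\|_2^2+2^{p-1}C_{a_0}\|w_j\|_{p+1}^{p+1}+o(1)$ directly, with the $o(1)$ absorbing the cross terms $\int|u||w_j|$ and $\int|u|^p|w_j|$ via local compactness and the decay of $|u|$, $|u|^p$ at infinity. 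This is exactly how the paper handles the analogous estimate in Step~3 of Proposition~\ref{claim:3.1}.
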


\medskip

\begin{proof}
Since $\calS_\e(r_2)$ is bounded in $H_\e$, after extracting a subsequence if necessary, we
may assume $u_j\wlimit u_0$ weakly in $H_\e$ for some $u_0\in H_\e$.  We will
show that $u_j\to u_0$ strongly in $H_\e$.
Denoting $B_R=\{ x\in\R^N;\, \abs x<R\}$, it suffices to show that
    \be\label{eq:3.31}
        \lim_{R\to\infty} \lim_{j\to\infty} \norm{u_j}_{H_\e(\R^N\setminus B_R)}^2=0.
    \ee
To show \eqref{eq:3.31} we first we note that, since $\varepsilon >0$ is fixed,  
$\norm{u_j}_{H_\e(\R^N\setminus B_L)}<2r_2$ for
a large $L>1$.  In particular, for any $n\in\N$
    $$  \sum_{i=1}^n \norm{u_j}_{H_\e(D_i)}^2 < 4r_2^2,
    $$
where $D_i= B_{L+i}\setminus B_{L+i-1}$.

Thus, for any $j\in\N$, there exists $i_j\in \{1,2,\cdots,n\}$ such that
    $$  \norm{u_j}_{H_\e(D_{i_j})}^2 < {4r_2^2\over n}.
    $$
Now we choose $\zeta_i\in C^1(\R,\R)$ such that $\zeta_i(r)=1$ for $r\leq L+i-1$,
$\zeta_i(r)=0$ for $r\geq L+i$ and $\zeta_i'(r)\in [-2,0]$ for all $r>0$.
We set
    $$
     \widetilde u_j(x)=(1- \zeta_{i_j}(\abs x))u_j(x).
    $$
We have, for a constant  $C>0$ independent of $n$, $j$
    \begin{eqnarray}
& &    \Je'(u_j)\widetilde u_j = \Ie'(u_j)\widetilde u_j+\Qe'(u_j)\widetilde u_j,    \nonumber \\
& &    \Ie'(u_j)\widetilde u_j = \Ie'(\widetilde u_j)\widetilde u_j
       +{\rm Re}\int_{D_{i_j}}  \left(\frac{1}{i} \nabla- A(\e x)\right)(\zeta_{i_j}u_j)
        \overline{\left(\frac{1}{i} \nabla- A(\e x)\right)((1-\zeta_{i_j})u_j)}\, dx \nonumber \\ 
        && + \int_{D_{i_j}} V(\epsilon x)\zeta_{i_j}(1- \zeta_{i_j})|u_j|^2  + 
        [f(|(1- \zeta_{i_j})u_j|^2)(1-\zeta_{i_j}) - f(|u_j|^2)](1-\zeta_{i_j})|u_j|^2\, dx \nonumber\\
   & & \geq \Ie'(\widetilde u_j)\widetilde u_j -{C\over n}, \label{key} \\
    & & \Qe'(u_j)\widetilde u_j =
    (p+1)\left(\epsilon^{-2}\norm{u_j}_{L^2(\R^N\setminus(\Omega_{2h_0}/\epsilon))}^2-1\right)_+^{p-1\over 2} \nonumber \\ &&
   \times \e^{-2}\int_{\R^N\setminus(\Omega_{2h_0}/\epsilon))}(1-\zeta_{i_j})|u_j|^2\, dx \geq 0.  \label{eq:3.33}  
    \end{eqnarray}
Since $\Je'(u_j)\widetilde u_j\to 0$, it follows from \eqref{key}--\eqref{eq:3.33} that
    $$  \Ie'(\widetilde u_j)\widetilde u_j \leq {C\over n}+o(1) \quad
        \hbox{as}\ j\to \infty.
    $$
Now recording that $\norm{\widetilde u_j}_{\e} < 2 r_2$ we have by  \eqref{eq:3.3} for some $C>0$
 $$
\norm{\widetilde u_j}_{\e}^2 \leq {C\over n}+ o(1).
 $$
Thus, from the definition of $\widetilde u_j$, we deduce that
$$
\norm{u_j}_{H_\e(\R^N\setminus B_{L+n})}^2 \leq {C\over n}+o(1).
$$
That is, \eqref{eq:3.31} holds and $(u_j)$ strongly converges.
\end{proof}

\bigskip

The following lemma will be useful to compute the relative category.

\medskip

\begin{lemma}\label{claim:3.4}
There exists $C_0>0$ independent of $\epsilon >0$ such that
    \be\label{eq:3.34}
        \Je(u) \geq L_{m_0}(|u|) -C_0\epsilon^2 \quad \hbox{for all}\ u\in\calS_\e(r_1).
         \ee
\end{lemma}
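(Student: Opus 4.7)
The plan is to reduce \eqref{eq:3.34} to a one-variable elementary comparison that is controlled by the penalization $\Qe$. By the diamagnetic inequality \eqref{eq:2.3},
$$\half\int_{\R^N}\Bigl|\bigl(\tfrac{1}{i}\nabla - A_\e\bigr)u\Bigr|^2\,dx \;\geq\; \half\int_{\R^N}\bigl|\nabla|u|\bigr|^2\,dx,$$
so \eqref{eq:3.34} reduces to establishing
$$\half\int_{\R^N}(V_\e(x) - m_0)\,|u|^2\,dx \;+\; \Qe(u) \;\geq\; -C_0\,\e^2.$$

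I would next split this integral into the contributions on $\Omega_{2h_0}/\e$ and on its complement. On $\Omega_{2h_0}/\e$, combining $V\geq m_0$ on $\Omega$ (by definition of $m_0$) with the choice of $h_0$ giving $V>m_0$ on $\overline{\Omega_{2h_0}\setminus\Omega}$ yields $V_\e\geq m_0$, so this part is nonnegative. The hypothesis $\sup_{\R^N}V<\infty$ of Theorem~\ref{claim:1.1} guarantees that $M:=\sup_{\R^N}|V-m_0|<\infty$, and therefore
$$\half\int_{\R^N\setminus(\Omega_{2h_0}/\e)}(V_\e-m_0)|u|^2\,dx \;\geq\; -\tfrac{M}{2}\,t,\qquad t:=\|u\|_{L^2(\R^N\setminus(\Omega_{2h_0}/\e))}^2.$$

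It then remains to prove the scalar inequality
$$\varphi(t) \;:=\; -\tfrac{M}{2}\,t + (\e^{-2}t-1)_+^{(p+1)/2} \;\geq\; -C_0\,\e^2$$
for all $t\geq 0$, uniformly in small $\e>0$. For $t\leq\e^2$ the penalization vanishes and $\varphi(t)\geq -M\e^2/2$ is immediate. For $t>\e^2$, the substitution $y=\e^{-2}t-1>0$ rewrites $\varphi = -\tfrac{M}{2}(y+1)\e^2 + y^{(p+1)/2}$; since $(p+1)/2>1$, elementary calculus yields the unique minimizer $y^*=(M\e^2/(p+1))^{2/(p-1)}$, at which $y^*\e^2$ and $(y^*)^{(p+1)/2}$ are both of order $\e^{2+4/(p-1)}=o(\e^2)$. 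Hence $\varphi(t)\geq -M\e^2/2 + o(\e^2) \geq -C_0\e^2$ for a suitable $C_0$ and all small $\e>0$. The only nontrivial step is this last one-variable minimization; the superlinearity $(p+1)/2>1$ of the penalization is precisely what allows it to absorb the linear perturbation $\tfrac{M}{2}y\e^2$ with an $o(\e^2)$ residual, so the main obstacle — choosing the right exponent in $\Qe$ — has already been built into its definition.
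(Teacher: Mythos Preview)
Your proof is correct and follows the same overall strategy as the paper: apply the diamagnetic inequality to reduce to showing $\tfrac12\int (V_\e-m_0)|u|^2 + Q_\e(u)\ge -C_0\e^2$, then control the negative part of the potential term by the penalization. The differences are only in bookkeeping. The paper splits at $\Omega/\e$ and bounds $V-m_0\ge -(m_0-\underline V)$ outside (so it does not invoke $\sup V<\infty$), whereas you split at $\Omega_{2h_0}/\e$ --- which matches the support of $Q_\e$ exactly --- and use the upper bound on $V$. For the resulting one-variable inequality the paper simply distinguishes $t\le 2\e^2$ (trivial) and $t\ge 2\e^2$, in which case $\e^{-2}t-1\ge \tfrac12\e^{-2}t\ge 1$ gives $Q_\e(u)\ge \tfrac12\e^{-2}t$, already enough to absorb the linear loss; your calculus minimization reaches the same conclusion and makes the role of $(p+1)/2>1$ explicit.
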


\begin{proof}
    \begin{eqnarray*}
        \Je(u) &\geq& L_{m_0}(|u|)+\half\intRN(V(\epsilon x)-m_0)|u|^2\, dx +Q_\epsilon(u)\\
        &\geq& L_{m_0}(|u|) -\half(m_0-\underline V)\norm u_{L^2(\R^N\setminus(\Omega/\epsilon))}^2
                +Q_\epsilon(u).
    \end{eqnarray*}
We distinguish the two cases:  (a) $\norm u_{L^2(\R^N\setminus(\Omega/\epsilon))}^2\leq 2\epsilon^2$,
(b) $\norm u_{L^2(\R^N\setminus(\Omega/\epsilon))}^2\geq 2\epsilon^2$.
\vspace{1mm}

If case (a) occurs, we have
    $$  \Je(u) \geq L_{m_0}(|u|)-(m_0-\underline V)\epsilon^2
    $$
and \eqref{eq:3.34} holds.  If case (b) takes  place, we have
    $$  Q_\epsilon(u)
        \geq \left(\half\epsilon^{-2}\norm u_{L^2(\R^N\setminus(\Omega/\epsilon))}^2
                \right)^{p+1\over 2}
        \geq \half\epsilon^{-2}\norm u_{L^2(\R^N\setminus(\Omega/\epsilon))}^2
    $$
and thus
    \begin{eqnarray*}
    \Je(u) &\geq& L_{m_0}(|u|)
    +\half(\epsilon^{-2}-(m_0-\underline V))\norm u_{L^2(\R^N\setminus(\Omega/\epsilon))}^2\\
    &\geq& L_{m_0}(|u|) \quad \hbox{for $\epsilon>0$ small}.
    \end{eqnarray*}
Therefore \eqref{eq:3.34} also holds. 
\end{proof}

\medskip

\setcounter{equation}{0}
\section{\label{section:4} A $\mathbb{S}^1$-invariant neighborhood of expected solutions}

In order to find critical points of the penalized functional $\Je$, we need to find a
$\mathbb{S}^1$-invariant neighborhood  $\calXed$ of expected solutions, which is  positively invariant under a
$\mathbb{S}^1$-equivariant pseudo-gradient flow.

\medskip

We fix $0<\rho_1<\rho_0<r_2$ and we then choose $\delta_1$, $\delta_2>0$ according to Lemma \ref{claim:2.7}
and Proposition \ref{claim:3.1}.
We set for $\delta\in (0,\min\{ {\delta_2\over 4}(\rho_0-\rho_1),\delta_1\})$,
    $$  \calXed=\{ u\in\calS_\e(\rho_0);\, \epsilon\Upsilon_\e(u)\in\Omega([0,\nu_0]),\
        \Je(u)\leq E(m_0)+\delta-{\delta_2\over 2}(\wrho_\e(u)-\rho_1)_+\}.
    $$
We notice that $\calXed$ is $\mathbb{S}^1$-invariant, namely if $u \in \calXed$ then $ \gamma u \in \calXed$ 
for any $\gamma \in \mathbb{S}^1$.
We shall try to find $\mathbb{S}^1$-orbits of critical points of $\Je$ in $\calXed$. In this aim first note that

\begin{itemize}
\item[(a)] $u\in\calS_\e(\rho_0)$ and $\epsilon\Upsilon_\e(u)\in\Omega([\nu_1,\nu_0])$
imply, by Lemma \ref{claim:2.7}, that
    \be\label{eq:4.1}
        \Je(u) \geq \Ie(u) \geq E(m_0)+\delta_1 > E(m_0)+\delta.
    \ee

\noindent
In particular,
    $$   \epsilon\Upsilon_\e(u)\in\Omega([0,\nu_1)) \quad \hbox{for}\ u\in\calXed.
    $$
\item[(b)]  For $u\in\calXed$, if $\wrho_\e(u)=\rho_0$, i.e., $u\in\partial\calS_\e(\rho_0)$,
then by the choice of $\delta$
    \be\label{eq:4.2}
        \Je(u)\leq E(m_0)+\delta-{\delta_2\over 2}(\rho_0-\rho_1) < E(m_0)-\delta.
    \ee
\end{itemize}

\medskip

\subsection{\label{subsection:4.1} A $\mathbb{S}^1$-equivariant deformation theorem}

\noindent
Now we consider a deformation flow defined by
    \begin{equation}\label{eq:4.3}
    \begin{cases}
        {d\eta\over d\tau}=-\phi(\eta) {\calV(\eta)\over\norm{\calV(\eta)}_{H^1}},\\
        \eta(0,u)=u,
        \end{cases}
    \end{equation}
where $\calV(u):\, \{ u\in H_\e;\, \Je'(u)\not=0\}\to H_\e$ is a
locally Lipschitz continuous $\mathbb{S}^1$-equivariant, pseudo-gradient vector field satisfying
    $$  \norm{\calV(u)}_{\e}\leq \norm{\Je'(u)}_{(H_\e)^*},\quad
        \Je'(u)\calV(u)\geq \half\norm{\Je'(u)}_{(H_\e)^*}^2
    $$
and $\phi(u):\, H_\e\to [0,1]$ is a locally Lipschitz
continuous function. We require that $\phi(u) $ satisfies $\phi(u)=0$ if $\Je(u)\not\in
[E(m_0)-\delta,E(m_0)+\delta]$.

\medskip
Arguing as in \cite[Proposition 4.1]{CJT} (see also \cite[Theorem 1.8]{BBF}), we can derive 
the following deformation theorem in a neighborhood of expected solutions
$\calXed$.

\begin{proposition}\label{claim:4.1}
For any $c\in (E(m_0)-\delta,E(m_0)+\delta)$ and for any
$\mathbb{S}^1$-invariant neighborhood $O$ of
$\calK_c\equiv \{ u\in \calXed;\, \Je'(u)=0,\ \Je(u)=c\}$ ($O=\emptyset$ if
$\calK_c=\emptyset$), there exist $d>0$ and a
$\mathbb{S}^1$-equivariant
deformation
$\eta(\tau,u):\, [0,1]\times(\calXed\setminus O)\to\calXed$ such that
\begin{itemize}
\item[(i)] $\eta(0,u)=u$ for all $u$.
\item[(ii)] $\eta(\tau,u)=u$ for all $\tau\in [0,1]$ if $\Je(u)\not\in [E(m_0)-\delta,
E(m_0)+\delta]$.
\item[(iii)] $\Je(\eta(\tau,u))$ is a non-increasing function of $\tau$ for all $u$.
\item[(iv)] $\Je(\eta(1,u))\leq c-d$ for all $u\in \calXed\setminus O$ satisfying
$\Je(u)\leq c+d$.
\end{itemize}
\end{proposition}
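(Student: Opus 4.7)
The plan is to adapt the standard pseudo-gradient deformation argument to the constrained set $\calXed$, with two additional features: $\mathbb{S}^1$-equivariance, and positive invariance of $\calXed$ itself. The formal setup is as in \eqref{eq:4.3}; the substantive work lies in showing that trajectories remain in $\calXed$.

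First, I would invoke the Palais--Smale condition (Proposition \ref{claim:3.3}) in $\{u\in\calS_\e(r_2);\,\epsilon\Upsilon_\e(u)\in\Omega([0,\nu_0])\}$. By the usual contradiction argument, for any $\mathbb{S}^1$-invariant neighborhood $O$ of $\calK_c$ there exist $d_0,\beta>0$ such that $\|\Je'(u)\|_{(H_\e)^*}\geq\beta$ for every $u\in\calXed\setminus O$ with $|\Je(u)-c|\leq d_0$. Next I would construct an $\mathbb{S}^1$-equivariant locally Lipschitz pseudo-gradient $\calV$ on $\{\Je'\neq 0\}$ satisfying $\|\calV\|_\e\leq\|\Je'\|_{(H_\e)^*}$ and $\Je'(u)\calV(u)\geq\tfrac12\|\Je'(u)\|_{(H_\e)^*}^2$, by the standard partition-of-unity argument followed by averaging over the compact group $\mathbb{S}^1$. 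I would take $\phi$ as an $\mathbb{S}^1$-invariant locally Lipschitz cutoff with $\phi\equiv 1$ on $\calXed\cap\{|\Je-c|\leq d_0/2\}\setminus O$, with $\phi\equiv 0$ outside a slightly larger open set contained in $\calXed\cap\{|\Je-c|\leq d_0\}$, and with $\phi=0$ whenever $\Je\notin[E(m_0)-\delta, E(m_0)+\delta]$. The flow $\eta$ is then defined by \eqref{eq:4.3}, normalized so that $\|\dot\eta\|_\e\leq\phi(\eta)\leq 1$.

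The heart of the proof is positive invariance of $\calXed$ under $\eta$. Three conditions must be preserved: (a) $\wrho_\e(u)\leq\rho_0$; (b) $\epsilon\Upsilon_\e(u)\in\Omega([0,\nu_0])$; and (c) the slanted bound $\Je(u)\leq E(m_0)+\delta-\tfrac{\delta_2}{2}(\wrho_\e(u)-\rho_1)_+$. Condition (b) comes for free: by Lemma \ref{claim:2.7}, $\Je\geq E(m_0)+\delta_1>E(m_0)+\delta$ on $\{\epsilon\Upsilon_\e\in\Omega([\nu_1,\nu_0])\}$, so $\phi$ vanishes there and the flow is frozen before $\epsilon\Upsilon_\e(\eta)$ can leave $\Omega([0,\nu_1))$. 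To handle (a) and (c) jointly, I would consider
\[
G(u) := \Je(u)+\tfrac{\delta_2}{2}\bigl(\wrho_\e(u)-\rho_1\bigr)_+
\]
and show $G\circ\eta$ is non-increasing in $\tau$. Since $\wrho_\e$ is $1$-Lipschitz in $\|\cdot\|_\e$, one has $\wrho_\e(\eta(\tau_2))-\wrho_\e(\eta(\tau_1))\leq\int_{\tau_1}^{\tau_2}\phi(\eta)\,d\tau$, while the pseudo-gradient estimate yields $\Je(\eta(\tau_2))-\Je(\eta(\tau_1))\leq-\int_{\tau_1}^{\tau_2}\tfrac12\phi(\eta)\|\Je'(\eta)\|_{(H_\e)^*}\,d\tau$. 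On the annular region $\{\rho_1<\wrho_\e\leq\rho_0\}\cap\calXed$, Proposition \ref{claim:3.1} yields $\|\Je'\|_{(H_\e)^*}\geq\delta_2$ (its hypotheses $\Je\leq E(m_0+\nu_1)$ and $\epsilon\Upsilon_\e\in\Omega([0,\nu_0])$ follow from the definition of $\calXed$ together with (b) above). Combining these two integral estimates, the decrease of $\Je$ dominates the maximal increase $\tfrac{\delta_2}{2}\cdot\phi$ of the second summand of $G$; on the subset $\{\wrho_\e\leq\rho_1\}$ one has $G=\Je$ and the decrease is trivial. Hence $G(\eta(\tau,u))\leq G(u)\leq E(m_0)+\delta$, proving (c). Claim (a) then follows from the compatibility $\delta<\tfrac{\delta_2}{4}(\rho_0-\rho_1)$ built into the definition of $\calXed$: were $\wrho_\e(\eta)$ ever to reach $\rho_0$, (c) would force $\Je(\eta)<E(m_0)-\delta$ and hence $\phi=0$, so the boundary $\partial\calS_\e(\rho_0)$ is never attained.

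Properties (i)--(iv) now follow from textbook considerations. Items (i), (ii) and (iii) are built into the construction of $\eta$ and $\phi$. For (iv), any trajectory starting in $\calXed\setminus O$ with $\Je\leq c+d$ (for $d<d_0/2$ small) remains in $\calXed\cap\{|\Je-c|\leq d_0\}\setminus O'$ for a fixed positive time $T$ depending on the $H_\e$-distance separating $O$ from a chosen intermediate $O'\supset\calK_c$; on that interval $\phi\equiv 1$ and $\Je$ decreases at rate at least $\beta/2$, so taking $d<\beta T/4$ yields the required drop after time one (possibly after rescaling $T$ to lie in $[0,1]$). The main obstacle is the coupled monotonicity $\tfrac{d}{d\tau}G(\eta)\leq 0$: it rests crucially on the quantitative gradient lower bound on the annulus (Proposition \ref{claim:3.1}), on the $1$-Lipschitz behavior of $\wrho_\e$, and on the compatibility $\delta<\tfrac{\delta_2}{4}(\rho_0-\rho_1)$ encoded in the very definition of $\calXed$; without any one of these ingredients the flow could push trajectories across $\partial\calS_\e(\rho_0)$ while their energy still lies in the deformation window.
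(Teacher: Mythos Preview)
Your approach is correct and is precisely the one the paper defers to by citing \cite{CJT} and \cite{BBF}; the observations \eqref{eq:4.1}--\eqref{eq:4.2} recorded just before the statement are exactly the inputs your invariance argument uses (the energy barrier from Lemma \ref{claim:2.7} for the $\Upsilon_\e$-constraint, and the gradient lower bound from Proposition \ref{claim:3.1} driving the $G$-monotonicity on the annulus). One small remark: your stated support condition $\supp\phi\subset\calXed$ is awkward to realize (the set $\calXed\cap\{|\Je-c|\leq d_0/2\}\setminus O$ can touch the slanted boundary of $\calXed$) and would in any case make the $G$-monotonicity argument redundant; the cleaner construction, as in \eqref{eq:4.3}, lets $\phi$ depend only on $\Je$-levels and on $O$, with positive invariance of $\calXed$ then established via $G$ exactly as you outline.
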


\medskip

\subsection{\label{subsection:4.2} Two maps between topological pairs}

Now for $c\in\R$, we set
    $$  \calXed^c=\{ u\in\calXed;\, \Je(u)\leq c\}.
    $$
For $\hdelta>0$ small, using relative $\mathbb{S}^1$-equivariant category, we shall estimate the change of 
topology between $\calXp$ and $\calXm$.
\medskip

We recall that $K= \{ x \in \Omega ; V(x) = m_0\}. $ For $s_0\in (0,1)$ small we introduce
two maps:
    \begin{eqnarray*}
    && \tilde  \Phi_\epsilon:\, ([1-s_0,1+s_0]\times K, \{ 1\pm s_0\}\times K)
\to (\calXp, \calXm);\\
    && \tilde \Psi_\epsilon:\, (\calXp, \calXm) \to\\
    &&\qquad ([1-s_0,1+s_0]\times\Omega([0,\nu_1]), ([1-s_0,1+s_0]\setminus\{ 1\})\times\Omega([0,\nu_1]))
    \hss.
    \end{eqnarray*}
Here we use notation from algebraic topology: $f:\, (A,B)\to (A',B')$ means $B\subset A$,
$B'\subset A'$, $f:\, A\to A'$ is continuous and $f(B)\subset B'$.

\medskip

\noindent
{\sl Definition of $ \tilde\Phi_\epsilon$}: \\
Fix a least energy solution $U_0\in \whS$
of $-\Delta u+m_0 u=f(u)$ and set 
    $$
    \tilde \Phi_\epsilon(s,p)=   e^{i A(p) \left( {x-{p/ \epsilon}\over s} \right)} U_0
    \big(\frac{x- p/\epsilon}{s}\big).
    $$
 Let us show that $ \tilde \Phi_\epsilon$ is well-defined
for a suitable choice of $s_0$ and $\hdelta$ and assuming $\epsilon >0$ small enough.
\vspace{1mm}

By the exponential decay of $U_0$, we can find $s_0\in (0,1)$ small such that
    $$  \| e^{iA(p)(\frac{x-p/\e}{s})} U_0(\frac{x-p/\e}{s}) 
        - e^{iA(p)(x-p/\e)} U_0(x-p/\e)\|_\e <\rho_1
    $$
for all $p\in K$, $s\in [1-s_0,1+s_0]$ and small $\e>0$.
Therefore, using the first property of Lemma \ref{claim:2.6}, that is
$$|\Upsilon_\e(u)-p| \leq 2R_0$$ for
$u(x) = e^{i A(\e p) \left( {x- p} \right)} U(x-p) + \varphi(x) \in \calS_\e(\rho_0)$, we get
    $$
    \big|\Upsilon_\e(e^{i A(p) \left( \frac{x - p/\varepsilon}{s} \right)} 
    U_0\big(\frac{x - p/\varepsilon}{s}\big)) - p/\varepsilon \big| \leq 2 R_0.
    $$
It follows that, for $p\in K$, $s\in [1-s_0,1+s_0]$
\be\label{eq:add}
\epsilon\Upsilon_\e(e^{i A(p) \left( \frac{x - p/\varepsilon}{s} \right)}U_0\big({x-p/\epsilon\over s}\big))=p+o(1)
\ee
and so $\epsilon\Upsilon_\e \tilde \Phi_\epsilon(s,p) \in \Omega([0,\nu_0])$ for $\e >0$ small enough.

\vspace{1mm}

Since $U_0$ is a least energy solution, we note that $|U_0|$ satisfies the Pohozaev identity
\eqref{eq:limit} and thus $P_0(|U_0|)=1$ and $P_0(|U_0(\frac{x}{s})|)=s$.  
Also we have, by Lemma \ref{pohozaev}, for $p\in K$ and $s\in [1-s_0,1+s_0]$
    \begin{eqnarray*}
    \Je(e^{i A(p) \left(\frac{x-p/\epsilon}{s} \right)} U_0({x-p/\epsilon\over s}))
    &=& L_{m_0}(|U_0({x-p/\epsilon\over s})|)+o(1)\\
    &=& g(P_0(U_0({x-p/\epsilon\over s}))) E(m_0) +o(1)\\
    &=& g(s) E(m_0) + o(1),
    \end{eqnarray*}
where $g:\, [0,\infty)\to \R$ is defined in \eqref{eq:2.14}.
Note that $g$ satisfies $g(t)\leq 1$ for all $ t>0$ and that $ g(t)=1$ holds if and only if $t=1$. 
Thus choosing  $\hdelta>0$ small so that $g(1\pm s_0)E(m_0)<E(m_0)-\hdelta$, we have
$
\tilde \Phi_\epsilon(\{1\pm s_0\}\times K) \subset  \calXm
$
and thus $\tilde \Phi_\epsilon$ is well-defined. \medskip

\noindent
{\sl Definition of $\tilde \Psi_\epsilon$}:\\
We introduce the continuous function $\tilde P_0 : \calS_\e(r_0) \to \R$ by
    \begin{equation*}  \tilde P_0(u)=\begin{cases}
                            1+s_0 &\hbox{if $P_0(u)\geq 1+s_0$,}\\
                            1-s_0 &\hbox{if $P_0(u)\leq 1-s_0$,}\\
                            P_0(u)  &\hbox{otherwise,}
                        \end{cases}
    \end{equation*}
where $P_0$ is given in \eqref{eq:2.13} and we  define our operator $\tilde \Psi_\epsilon$ by
    $$ \tilde \Psi_\epsilon(u)=(\tilde P_0(u), \epsilon\Upsilon_\e(u))
        \quad \hbox{for}\ u\in\calXp.
    $$
Let us show that $\tilde \Psi_\epsilon$ is  well-defined for $\e>0$ small enough.
By  definition $\tilde \Psi_\epsilon(\calXp)\subset [1-s_0,1+s_0]\times
\Omega([0,\nu_1])$.  Now if $u\in\calXm$ we have by Lemma \ref{claim:3.4}, 
    $$
      L_{m_0}(|u|) \leq \Je(u) + C_0\epsilon^2 \leq E(m_0)-\hdelta + C_0\epsilon^2.
    $$
Thus, for $\e>0$ small enough, 
\begin{equation}\label{strict}
L_{m_0}(|u|) < E(m_0).
\end{equation}
At this point we recall, see \cite{JT1} for a proof, that $E(m_0)$ can be characterized as
    \be \label{eq:4.6}
      E(m_0)=\inf\{ L_{m_0}(u);\, u\not=0,\, P_0(u)=1\}.
    \ee
Thus \eqref{strict} implies that $P_0(u) \not= 1$ and $\tilde \Psi_\epsilon$ is well-defined. 
\vspace{2mm}

Now setting $\Phi_\epsilon(s,p): = \mathbb{S}^{1} \tilde \Phi_\e(s,p)$ for each 
$(s, p) \in [1-s_0,1+s_0]\times K$, it results that $\Phi_\epsilon$ is well-defined as a map
     \begin{eqnarray*}
    &&\Phi_\epsilon:\, ([1-s_0,1+s_0]\times K, \{ 1\pm s_0\}\times K)
        \to (\calXp/ \mathbb{S}^{1}, \calXm/ \mathbb{S}^{1}).\\
    \hss
    \end{eqnarray*}
Similarly setting$\Psi_\epsilon(\mathbb{S}^{1} u): = \tilde \Psi_\e(u)$ for any  $u \in \calXp$,  we see that
$\Psi_\epsilon$ is well-defined as a map
    \begin{eqnarray*}
    &&\Psi_\epsilon:\, (\calXp /\mathbb{S}^{1}, \calXm/\mathbb{S}^{1}) \to\\
    &&\qquad ([1-s_0,1+s_0]\times\Omega([0,\nu_1]), ([1-s_0,1+s_0]\setminus\{ 1\})\times\Omega([0,\nu_1])).
    \hss
    \end{eqnarray*}

\medskip

Finally we derive the following topological lemma.

\begin{proposition}\label{claim:4.2}
    \begin{eqnarray*}
    &&\Psi_\epsilon\circ\Phi_\epsilon:\,([1-s_0,1+s_0]\times K, \{ 1\pm s_0\}\times K) \\
    &&\quad\to([1-s_0,1+s_0]\times\Omega([0,\nu_1]),
    ([1-s_0,1+s_0]\setminus\{ 1\})\times\Omega([0,\nu_1]))
    \end{eqnarray*}
is homotopic to the embedding $j(s,p)=(s,p)$.  That is, there exists a continuous
map
    $$  \eta:\, [0,1]\times[1-s_0,1+s_0]\times K \to [1-s_0,1+s_0]\times\Omega([0,\nu_1])
    $$
such that
    \begin{eqnarray*}
        &&\eta(0,s,p)=(\Psi_\epsilon\circ\Phi_\epsilon)(s,p), \\
        &&\eta(1,s,p)=(s,p) \quad \hbox{for all}\ (s,p)\in [1-s_0,1+s_0]\times K,\\
        &&\eta(t,s,p)\in ([1-s_0,1+s_0]\setminus\{ 1\})\times\Omega([0,\nu_1])\\
        &&\qquad \hbox{for all}\ t\in [0,1] \ \hbox{and}\ (s,p)\in \{1\pm s_0\}\times K.
    \end{eqnarray*}
\end{proposition}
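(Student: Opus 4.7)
The plan is to build an explicit straight-line homotopy in the second coordinate, after observing that $\Psi_\epsilon\circ\Phi_\epsilon$ already preserves the first coordinate exactly.

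First I would compute $\Psi_\epsilon\circ\Phi_\epsilon$ on a point $(s,p)\in[1-s_0,1+s_0]\times K$. The definition of $\Psi_\epsilon$ on the quotient is $\Psi_\epsilon(\mathbb{S}^1 u)=(\tilde P_0(u),\epsilon\Upsilon_\e(u))$, which is well-posed because $\tilde P_0$ and $\Upsilon_\e$ are $\mathbb{S}^1$-invariant (the former through $|u|$, the latter by Lemma~\ref{claim:2.6}(ii)). Since $U_0$ is a least energy solution of the real limit problem, $P_0(U_0)=1$, and by the scaling identity $P_0(\omega(x/s))=s$ one has $P_0(\tilde\Phi_\e(s,p))=s\in[1-s_0,1+s_0]$; hence the truncation is inactive and $\tilde P_0(\tilde\Phi_\e(s,p))=s$. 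The second component, by \eqref{eq:add}, satisfies $q_\e(s,p):=\epsilon\Upsilon_\e(\tilde\Phi_\e(s,p))=p+o(1)$, and inspection of that computation shows the $o(1)$ is uniform in $(s,p)\in[1-s_0,1+s_0]\times K$, since $K$ is compact, $s$ ranges in a compact interval, and $U_0$ has exponential decay.

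Next I would define the homotopy coordinate-wise by
\[
\eta(t,s,p)=\bigl(s,\,(1-t)\,q_\e(s,p)+t\,p\bigr),\qquad t\in[0,1],
\]
and check it meets the three required boundary conditions: $\eta(0,s,p)=(s,q_\e(s,p))=(\Psi_\e\circ\Phi_\e)(s,p)$, $\eta(1,s,p)=(s,p)$, and on $\{1\pm s_0\}\times K$ the first coordinate is constantly $1\pm s_0\neq 1$, so $\eta(t,s,p)$ indeed lies in $([1-s_0,1+s_0]\setminus\{1\})\times\Omega([0,\nu_1])$ throughout.

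The remaining verification, which is the only genuinely nontrivial point, is that the convex combination $(1-t)q_\e(s,p)+tp$ stays inside $\Omega([0,\nu_1])$ for all $t\in[0,1]$ and all $(s,p)$, provided $\epsilon$ is small enough. By (V2) and the continuity of $V$, the compact set $K$ is contained in the open subset $\{y\in\Omega:V(y)<m_0+\nu_1\}$ of $\Omega$, so there exists $\delta>0$ such that the $\delta$-neighborhood of $K$ in $\R^N$ lies in $\Omega([0,\nu_1])$. Using the uniform convergence $q_\e(s,p)\to p$ obtained above, one has $|q_\e(s,p)-p|<\delta$ for all $(s,p)\in[1-s_0,1+s_0]\times K$ once $\epsilon$ is small; then the entire segment from $q_\e(s,p)$ to $p$ remains within distance $\delta$ of $p\in K$, hence in $\Omega([0,\nu_1])$. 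Continuity of $\eta$ is clear from that of $(s,p)\mapsto q_\e(s,p)$, which follows from the continuity of $\Upsilon_\e$ (Lemma~\ref{claim:2.6}(iii)) combined with the continuity of $(s,p)\mapsto\tilde\Phi_\e(s,p)$ into $H_\e$. This completes the proposed argument.
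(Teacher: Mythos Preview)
Your proof is correct and follows essentially the same route as the paper: compute that the first coordinate of $\Psi_\epsilon\circ\Phi_\epsilon$ is exactly $s$, then take the straight-line homotopy in the second coordinate between $\epsilon\Upsilon_\e(\tilde\Phi_\e(s,p))$ and $p$, using \eqref{eq:add}. You in fact supply more detail than the paper does, explicitly justifying via a $\delta$-neighborhood argument that the segment remains in $\Omega([0,\nu_1])$, whereas the paper simply invokes \eqref{eq:add} and asserts the desired properties hold for $\epsilon$ small.
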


\begin{proof}
By the definitions of $\Phi_\epsilon$ and $\Psi_\epsilon$, we have
    \begin{eqnarray*}
    (\Psi_\epsilon\circ\Phi_\epsilon)(s,p)
    &=&\Bigl( \tilde P_0(\e^{i A(p) \left( \frac{x - p/ \e}{s} \right)} U_0({x-p/\epsilon\over s})),
        \epsilon\Upsilon_\e(e^{i A(p) \left( \frac{x - p/ \e}{s} \right)} U_0({x-p/\epsilon\over s}))\Bigr) \\
    &=& \Bigl( s, \epsilon\Upsilon_\e(e^{i A(p) \left( \frac{x - p/ \e}{s} \right)} U_0({x-p/\epsilon\over s}))\Bigr).
    \end{eqnarray*}
We set
    $$  \eta(t,s,p)=\Bigl( s, (1-t) \epsilon \Upsilon_\e(e^{i A(p) \left( \frac{x - p/ \e}{s} \right)} 
        U_0({x-p/\epsilon\over s})) +tp\Bigr).
    $$
Recalling \eqref{eq:add}, we see that for  $\epsilon>0$ small $\eta(t,s,p)$ has the desired properties and
$\Psi_\epsilon\circ\Phi_\epsilon$ is homotopic to the embedding $j$. 
\end{proof}

\medskip
\setcounter{equation}{0}
\section{\label{section:5} Proof of Theorem \ref{claim:1.1}}

In order to prove our theorem, we shall need some topological tools that we now present for the reader convenience.
Following \cite{BW}, see also \cite{FW1, FW2}, we define

\medskip

\begin{definition} \label{claim:5.1}
Let $B \subset A$ and $B' \subset A'$ be topological spaces and $f:(A,B) \to (A',B')$ be a continuous map, that is 
$f : A \to A'$ is continuous and $f(B) \subset B'$. The category $\cat(f)$ of $f$ is the least integer 
$k \geq 0$ such that there exist open sets $A_0$, $A_1$, $\cdots$, $A_k$ with the following properties:
\begin{itemize}
\item[(a)] $A=A_0\cup A_1\cup\cdots\cup A_k$.
\item[(b)] $B\subset A_0$ and there exists a map $h_0:\, [0,1]\times A_0\to A'$
such that
    \begin{eqnarray*}
        &&h_0(0,x)=f(x) \qquad \hbox{for all}\ x\in A_0,\\
        &&h_0(1,x)\in B' \quad \qquad \hbox{for all}\ x\in A_0,\\
        &&h_0(t,x)=f(x) \qquad \hbox{for all}\ x\in B\ \hbox{and}\ t\in [0,1].
    \end{eqnarray*}
\item[(c)] For $i=1,2,\cdots, k$, $f|_{A_i}:\, A_i\to A'$ is homotopic to a
constant map.
\end{itemize}
\end{definition}

We also introduce the cup-length of $f: (A,B) \to (A',B')$. Let $H^*$ denote Alexander-Spanier cohomology
with coefficients in the field $\F$.
We recall that the cup product $\smile$ turns $H^*(A)$ into a ring with unit $1_A$, and it turns $H^*(A,B)$
into a module over $H^*(A)$. A continuous map $f : (A,B) \to (A',B')$ induces a homomorphism
$f^* : H^*(A') \to H^*(A)$ of rings as well as a homomorphism $f^* : H^*(A',B') \to H^*(A,B)$ of
abelian groups.  We also use notation:
    $$  \tilde H^n(A')=\begin{cases} 0 &\text{for $n=0$,}\\ H^n(A')&\text{for $n>0$.}\end{cases}
    $$
For more details on algebraic topology we refer to \cite{S}.

\begin{definition} \label{claim:5.2}
For  $f:(A,B) \to (A',B')$ the cup-length, $\cuplength(f)$ is defined as follows;
when $f^*:\, H^*(A',B')\to H^*(A,B)$ is not a trivial map, $\cuplength(f)$ is defined
as the maximal integer $k\geq 0$ such
that there exist elements $\alpha_1$, $\cdots$, $\alpha_k
\in \tilde{H}^*(A')$  and $\beta\in H^*(A',B')$  with
    \begin{eqnarray*}  f^*(\alpha_1\smile\cdots\smile\alpha_k \smile \beta)
        &=& f^*(\alpha_1)\smile\cdots\smile f^*(\alpha_k)\smile f^*(\beta) \\
        &\not=& 0 \ \hbox{in}\ H^*(A,B).
    \end{eqnarray*}
When $f^*=0:\, H^*(A',B')\to H^*(A,B)$, we define $\cuplength(f)=-1$.
\end{definition}

We note that $\cuplength(f)=0$ if $f^*\not=0:\, H^*(A',B')\to H^*(A,B)$ and $\tilde{H}^*(A')=0$.


Finally we recall

\begin{definition} \label{claim:5.3}
For a set $(A,B)$, we define the relative category $\cat(A,B)$ and the relative
cup-length $\cuplength(A,B)$ by
    \begin{eqnarray*}
        &&\cat(A,B)=\cat(id_{(A,B)}:\, (A,B)\to (A,B)),\\
        &&\cuplength(A,B)=\cuplength(id_{(A,B)}:\, (A,B)\to (A,B)).
    \end{eqnarray*}
We also set
    $$  \cat(A)=\cat(A,\emptyset),\quad \cuplength(A)=\cuplength(A,\emptyset).
    $$
\end{definition}

\medskip

The following lemma which is due to Bartsch \cite{B} (see \cite{CJT} for a proof) is one of the keys of 
our proof and
we make use of the continuity property of Alexander-Spanier cohomology.

\medskip

\begin{lemma} \label{claim:5.4}
Let $K\subset \R^N$ be a compact set.  For a $d$-neighborhood $K_d=\{ x\in \R^N;\,
\dist(x,K)\leq d\}$ and $I=[0,1]$, $\partial I=\{0,1\}$, we consider the inclusion
    $$  j:\, (I\times K,\partial I\times K)\to (I\times K_d,\partial I\times K_d)
    $$
defined by $j(s,x)=(s,x)$.  Then for $d>0$ small,
    $$  \cuplength(j) \geq \cuplength(K).
    $$
\end{lemma}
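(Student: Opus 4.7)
The plan is to exhibit a nonzero cup product in $H^*(I\times K_d,\partial I\times K_d)$ of the right structural type (that is, with $k$ reduced factors and one relative factor) and check that its image under $j^*$ is still nonzero. Set $k=\cuplength(K)$. By Definition~\ref{claim:5.2} applied to the identity of $(K,\emptyset)$, I can pick classes $\alpha_1,\ldots,\alpha_k\in\tilde H^*(K)$ and $\beta\in H^*(K)$ with
$$\omega := \alpha_1\smile\cdots\smile\alpha_k\smile\beta\ \neq\ 0\quad\text{in}\ H^*(K).$$

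The first real step is to transport these witnesses from $K$ to $K_d$ for small $d$. Because $K$ is compact and $K=\bigcap_{d>0}K_d$, the continuity property of Alexander--Spanier cohomology gives $H^*(K)\cong\varinjlim_{d\to 0}H^*(K_d)$ along the inclusion-induced restrictions. Hence for all sufficiently small $d>0$ there exist $\tilde\alpha_i\in\tilde H^*(K_d)$ and $\tilde\beta\in H^*(K_d)$ whose restrictions to $K$ are $\alpha_i$ and $\beta$. By naturality of the cup product, $\tilde\alpha_1\smile\cdots\smile\tilde\alpha_k\smile\tilde\beta$ restricts to $\omega\ne0$, so it is itself nonzero.

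Next I would promote this to the pair $(I\times K_d,\partial I\times K_d)$ via the projections $p:I\times K_d\to K_d$ and the pair map $q:(I\times K_d,\partial I\times K_d)\to (I,\partial I)$. Let $\tau\in H^1(I,\partial I)$ be the generator and define
$$A_i := p^*(\tilde\alpha_i)\in \tilde H^*(I\times K_d),\qquad B := q^*(\tau)\smile p^*(\tilde\beta)\in H^*(I\times K_d,\partial I\times K_d).$$
The relative Künneth formula, which is available here because the coefficients form a field $\F$ and the spaces involved are compact, gives
$$H^*(I\times K_d,\partial I\times K_d)\ \cong\ H^*(I,\partial I)\otimes H^*(K_d),$$
and under this isomorphism $A_1\smile\cdots\smile A_k\smile B$ corresponds to $\tau\otimes(\tilde\alpha_1\smile\cdots\smile\tilde\alpha_k\smile\tilde\beta)$, which is nonzero. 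Applying $j^*$ and using naturality, the same product in $(I\times K,\partial I\times K)$ corresponds via Künneth to $\tau\otimes\omega\ne 0$. Since this exhibits $k$ reduced factors and one relative factor with nonzero image under $j^*$, Definition~\ref{claim:5.2} yields $\cuplength(j)\ge k=\cuplength(K)$.

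The step I expect to require the most care is justifying the two cohomological inputs in the setting of a possibly very irregular compact set $K\subset\R^N$: namely, the continuity isomorphism $H^*(K)\cong\varinjlim H^*(K_d)$, which is precisely why the lemma is stated in Alexander--Spanier cohomology rather than singular cohomology (the neighborhoods $K_d$ need not deformation retract onto $K$), and the relative Künneth formula for the compact pair $(I\times K_d,\partial I\times K_d)$, which holds over a field. Everything else is naturality of cup products and pullbacks and can be carried out without further subtlety.
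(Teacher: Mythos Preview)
Your argument is correct and matches the approach the paper alludes to: the paper does not give its own proof of this lemma but defers to \cite{CJT} and singles out the continuity property of Alexander--Spanier cohomology as the crucial ingredient, which is precisely the tool you use to lift the witnessing classes from $K$ to $K_d$. The remaining K\"unneth step for the very simple pair $(I,\partial I)$ (essentially a suspension isomorphism) is routine, as you note.
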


Now we have all the ingredients to give the
\vspace{2mm}

\begin{proof}[Proof of Theorem \ref{claim:1.1}] %
Using Proposition \ref{claim:4.1}, we can apply $\mathbb{S}^1$-invariant L.S. theory and derive that
 for $\epsilon>0$ small the number of critical $\mathbb{S}^1$-orbits of $J_\e$ in $\calXp \setminus \calXm$ 
is at least  $\mathbb{S}^1$-$\cat(\calXp,\calXm)$   (see \cite[Theorem 4.2]{FW1} and \cite[Theorem 1.1]{cp}).

Since $\mathbb{S}^1$ acts freely on $H_\e \setminus \{0\}$, that is $\gamma u \neq u$ for all $u \in H_\e \setminus\{0\}$, $\gamma \in \mathbb{S}^1$, $\gamma \neq 1$, we have
\[
{\mathbb{S}}^1-\cat(\calXp,\calXm)= \cat (\calXp / \mathbb{S}^1, \calXm / \mathbb{S}^1).
\]
Finally, using Lemma \ref{claim:5.4}, we can argue as in the proof of Theorem 1.1 in \cite{CJT} and
we deduce that
    $$  \cat(\calXp/\mathbb{S}^1,\calXm/ \mathbb{S}^1) \geq \cuplength(K)+1.
    $$
Thus we conclude that $\Je$ has at least $\cuplength(K)+1$ critical $\mathbb{S}^1$ orbits in $\calXp\setminus\calXm$.
Recalling Proposition \ref{claim:3.2}, this completes the proof.  
\end{proof}

\bigskip

\begin{proof}[Proof of Remark \ref{mean-concen}]
From the proof of Proposition \ref{claim:3.1} we know that for any $\nu_0 >0$ small enough  
the critical points $u_{\e}^j$, $j= 1, \dots , \cuplength(K)+1$ satisfy
$$||u_{\e}^j (x) - e^{i A(x_j) (x- x_j) } U^j (x- x_\e^j) ||_{\e} \to 0$$
where $\e x_\e^j = \e \Upsilon_\e(u_{\e}^j) + o(1) \to x_0^j \in \Omega([0, \nu_0])$ and $U^i \in \whS$. 
Thus $w_{\e}^j(x) = u_{\e}^j(x + x_\e^j)$ converges to $ e^{i A(x_j) (x_j) } U^j \in \whS$. 
Now observing that these results holds for any $\nu_0 >0$ and any $\ell_0 > E(m_0+ \nu_0)$ we deduce, 
considering sequences $\nu_0^n \to 0$, $\ell_0^n \to E(m_0)$ and making a diagonal process, 
that it is possible to assume that each $w_{\e}^j$ converges to a least energy solution of
$$- \Delta U + m_0 U = f(U), \quad U >0, \quad U \in H^1(\R^N, \C).$$
Clearly also $$|u_{\e}^j(x)| \leq C exp(- c|x - x_{\e}^j|), \quad \mbox{for some } c, C >0$$
and this ends the proof. 
\end{proof}

\vspace{2mm}

\bigskip

\end{document}